\title{A number of properties enjoyed by two specially constructed  topologies on $C(X)$ }
\tikzstyle{arrow} = [thick,->,>=stealth]
\theoremstyle{plain}
\newtheorem{theorem}{Theorem}[section]
\title{A number of properties enjoyed by two specially constructed  topologies on $C(X)$}
\theoremstyle{plain}
\newtheorem{lemma}[theorem]{Lemma}
\theoremstyle{definition}
\newtheorem{definition}[theorem]{Definition}
\newtheorem{remark}[theorem]{Remark}
\newtheorem{counter example}[theorem]{Counter Example}
\newtheorem{corollary}[theorem]{Corollary}
\numberwithin{equation}{section}
\DeclareMathOperator{\cl}{cl}
\author[S. Dey]{Soumajit Dey}	\address{Department of Pure Mathematics, University of Calcutta, 35, Ballygunge Circular Road, Kolkata 700019, West Bengal, India}	\email{deysoumajit8@gmail.com}
\author[S. K. Acharyya]{Sudip Kumar Acharyya}	\address{Department of Pure Mathematics, University of Calcutta, 35, Ballygunge Circular Road, Kolkata
	700019, West Bengal, India}	\email{sdpacharyya@gmail.com}
\author[D. Mandal]{Dhananjoy Mandal} \address{Department of Pure Mathematics, University of Calcutta, 35 , Ballygunge Circular Road, Kolkata 700019, West Bengal, India}  \email{dmandal.cu@gmail.com / dmpm@caluniv.ac.in}
\keywords{$I$-$pseudocompactness$; $\aleph_0$-$boundedness$; $H$-$boundedness$; hemicompact; \v{C}ech-complete; cellularity; weight; Lindel\"{o}f number}
\subjclass[2020]{Primary 54C40; Secondary  46E30}
\begin{document}
		\title [A number of properties enjoyed by two specially constructed  topologies on $C(X)$]{A number of properties enjoyed by two specially constructed  topologies on $C(X)$ }
			\thanks {The first author extends immense gratitude and thanks to the University Grants Commission, New Delhi, for the award of research fellowship (NTA Ref. No. 211610214962).}
		\maketitle
		
	 \bibliographystyle{plain}
	 \begin{abstract}
	 	If $I$ is an ideal in the ring $C(X)$ of all real valued continuous functions defined over a Tychonoff space $X$, then $X$ is called $I$-$pseudocompact$ if the set $X\setminus \bigcap Z[I]$ is a bounded subset of $X$. Corresponding to $I$, the $m^I$-topology and $u^I$-topology on $C(X)$, akin to that of the well-known $m$-topology and $u$-topology in $C(X)$ respectively are already there in the literature. It is proved amongst others that the $m^I$-topology is first countable if and only if the $u^I$-topology= $m^I$-topology on $C(X)$ if and only if $X$ is $I$-$pseudocompact$. A special case of this result on choosing $I=C(X)$ reads: the $u$-topology and $m$-topology on $C(X)$ coincide if and only if $X$ is pseudocompact. It is established that the $m^I$-topology on $C(X)$ is second countable if and only if it is $\aleph_0$-$bounded$ if and only if $X$ is compact, metrizable and $I=C(X)$. Furthermore it is realized that the $m^I$-topology on $C(X)$ is hemicompact if and only if it is $\sigma$-compact if and only if this topology is $H$-$bounded$ if and only if $X$ is finite and $I=C(X)$. Finally on imposing some natural condition on $I$, it is realized that $X$ is $I$-$pseudocompact$ if and only if $C_{m^I}(X)$ is \v{C}ech complete.
	 \end{abstract}
	 \section{Introduction}
	 The topology of uniform convergence or briefly the $u$-topology and the $m$-topology on $C(X)$ are two fascinating topics for the researchers in the broad area of function spaces. A few pertinent problems addressing a variant of these two topologies on $C(X)$, via an ideal $I$ in $C(X)$ is investigated rather recently. To be specific, let $I$ be an ideal in the ring $C(X)$ of all real valued continuous functions constructed over a Tychonoff space $X$. For $f\in C(X)$, $\epsilon>0$ in $\mathbb{R}$ and $u\in C_+(X)\equiv$ the set of all strictly positive continuous functions on $X$, let $B_u(f,I,\epsilon)=\{g\in C(X): \sup\limits_{x\in X}|f(x)-g(x)|<\epsilon \text{ and }f-g\in I\}$ and $B_m(f,I,u)=\{g\in C(X):|f(x)-g(x)|<u(x) \text{ for all } x\in X\text{ and }f-g\in I\}$. Then the family $\mathcal{B}_u=\{B_u(f,I,\epsilon):f\in C(X),\hspace{.001cm} \epsilon>0\}$ is an open base for some topology designated as the $u^I$-topology on $C(X)$ in \cite{PA2023} and $C(X)$ with the $u^I$-topology is denoted in \cite{PA2023}, by $C_{u^I}(X)$. Incidentally the family $\mathcal{B}_m=\{B_m(f,I,u):f\in C(X),\hspace{.001cm}u\in C_+(X)\}$ makes an open base for a topology called the $m^I$-topology on $C(X)$ in \cite{A2017}. $C(X)$ with the $m^I$-topology is denoted by $C_{m^I}(X)$. The $m^I$-topology on $C(X)$ is initiated in \cite{A2017}, while the $u^I$-topology is introduced in \cite{PA2023}. In both of these two articles, several relevant problems connected with the $m^I$-topology on $C(X)$ are addressed. A number of pertinent problems related to the $u^I$-topology on $C(X)$ are investigated in \cite{PA2023}. We call the space $X$, $I$-$pseudocompact$ if $X\setminus \bigcap\limits_{g\in I}Z(g)=X\setminus \bigcap Z[I]$ is a bounded subset of $X$. With the choice $I=C(X)$, $I$-$pseudocompactness$ of $X$ reduces to $pesudocompactness$ of $X$. In the present article we continue our study on the $u^I$-topology and the $m^I$-topology on $C(X)$, by bringing into focus this time on several aspects of these two topologies, related to metrizability, connectedness, compactness and boundedness. It is already recorded in these two articles that $C_{u^I}(X)$ is a topological group while $C_{m^I}(X)$ is a topological ring and the $m^I$-topology on $C(X)$ is in general finer that the $u^I$-topology. It is established in \cite{PA2023}, Theorem 3.7 that with $I$, a convex ideal in $C(X)$ the $u^I$-topology coincides with the $m^I$-topology if and only if $X\setminus\bigcap\{Z(f):f\in I\}$ is a bounded subset of $X$. A subset $Y$ of $X$ is called bounded if each $f\in C(X)$ is bounded on $Y$. Bounded subsets of $X$ are also called relatively pseudocompact subsets in the literature \cite{m}. In the present article we show that the convexity hypothesis on $I$ in the last theorem is redundant. We want to mention in this context that we establish several conditions each necessary and sufficient for the coincidence of these two topologies on $C(X)$. A typical such condition says that $C_{u^I}(X)=C_{m^I}(X)$ if and only if $C_{m^I}(X)$ is first countable if and only if $X$ is $I$-$pseudocompact$. With the choice $I=C(X)$, this reads, the $m$-topology and $u$-topology on $C(X)$ coincide when and only when $C(X)$ with the $m$-topology is first countable when and only when $X$ is pseudocompact. This fact is also a special case of Theorem 2.2 in \cite{MKJ} with the choice $Y=\mathbb{R}$.
	 
	In Section 2 of this article we recall the definition of a number of topological properties many of which are weaker than the first countablity of a topological space and few other properties akin to the completeness of spaces. We recall also in this section a few standard cardinal functions. 
	 
	   Now let us briefly narrate the organization of the technical part of the present article.
	   
	    In Section 3 of this article, we gather a few seemingly interesting relevant properties of $I$-$pseudocompact$ spaces. A pseudocompact space $X$ is $I$-$pseudocompact$ regardless of the choice of the ideal $I$ in $C(X)$. We show that, there exist $I$-$pseudocompact$ spaces for appropriately chosen ideal $I$ of $C(X)$ which are not pseudocompact. Furthermore, we establish that given a locally pseudocompact non pseudocompact space $X$ and an ideal $I$ in $C(X)$ such that $X$ is $I$-$pseudocompact$, there exists an ideal $J$ in $C(X)$ such that $I\subsetneq J$ and $X$ is $J$-$pseudocompact$. Thus in this case there is no largest ideal $I$ in $C(X)$ such that $X$ is $I$-$pseudocompact$. Additionally we offer characterization of $I$-$pseudocompactness$ of $X$, in terms of two chosen subrings of $C(X)$.
	    
	     In Section 4 of this article, we first show that a metric $d$ is defined on $C(X)$, so that the induced metric topology is identical to the $u^I$-topology on $C(X)$. Furthermore, we check that $C_{m^I}(X)$ is metrizable if and only if $C_{m^I}(X)$ is first countable if and only if $C_{m^I}(X)=C_{u^I}(X)$ if and only if $C_{m^I}(X)$ is a countably tight space. We establish the equivalence of several other conditions each related in some way or other to a few fascinating topological properties of the $m^I$-topology on $C(X)$ and equivalent to the coincidence of the $u^I$-topology and $m^I$-topology on $C(X)$.

	 In Section 5 of this article, we endeavor to interrelate a few well-known cardinal functions on the space $C_{m^I}(X)$. Specifically, we first realize the character of the space $C_{m^I}(X)$ to be identical to the dominating number of the set $X\setminus \bigcap Z[I]$. A special case of this fact with $I=C(X)$ reads: the character of $C(X)$ with the $m$-topology is identical to the dominating number of $X$- this later fact is essentially Theorem 3.1 in \cite{MKJ}. We conclude this section with the realization that four important cardinal functions viz, the weight, density, cellularity and the Lindel\"{o}f number of this space $C_{m^I}(X)$ are identical, a behavior reminiscent of the metric topology. A special case of the last mentioned fact with $I=C(X)$, is essentially Theorem 3.2 in \cite{MKJ}.

	 In Section 6, we correlate a special type of boundedness property viz. $\aleph_0$-boundedness with the  second countability/ separability/ Lindel\"{o}f property of the space $C_{u^I}(X)$ and $C_{m^I}(X)$. Subsequently we discover that two other types of boundedness viz, M-boundedness and H-boundedness and their strict versions too are seen to be equivalent to the hemicompactness/ $\sigma$-compactness of the space $C_{m^I}(X)$.
	 
	 In Section 7, we investigate a few properties related to connectedness/ path connectedness of the spaces $C_{u^I}(X)$ and $C_{m^I}(X)$. We show that at each point of these two spaces, the component, path component and the quasi-component coincide. We also determine when these two spaces become extremally disconnected. 
	 
	 	 In Section 8 of this article, we impose an additional condition on $I$ viz., that $I$ is closed under uniform limits and realize amongst others that $X$ is $I$-$pseudocompact$ if and only if $C_{m^I}(X)$ is \v{C}ech complete. Thus it turns out that in this article the conditon $I$-$pseudocompactness$ of $X$ plays the role of pseudocompactness of $X$. Thereby placing Theorem 2.2 and a portion of Theorem 2.10 in \cite{MKJ} on a more general setting.

	 \section{A few standard topological properties recalled}
	 
	 A subset $S$ of a space $X$ is said to have countable character if there exists a sequence $\{W_n :n\in \mathbb{N} \}$ of open subsets in $X$ such that $S\subseteq W_n$ for all $n\in \mathbb{N}$ and if $W$ is
	 any open set containing $S$, then $W_n \subseteq W$ for some $n\in \mathbb{N}$. A space $X$ is said to be of countable type (pointwise countable type) if each compact set (point) is contained in a compact set having countable character \cite{MKJ}. \\
	 
	 	 A $\pi$-base for a space $X$ is a family of nonempty open sets in $X$ such that every nonempty open set in $X$ contains a member of this family. A point $x \in X$ is said to
	 have a countable local $\pi$-base, if there exists a countable collection $\mathcal{B}_x$ of nonempty
	 open sets in $X$ such that each neighborhood of $x$ contains some member of $\mathcal{B}_x$. If each point of $X$ has a countable local $\pi$-base, then $X$ is said to have countable
	 $\pi$-character \cite{MKJ}. \\
	 
	 A space $X$ is an $r$-space if each point of $X$ has a sequence $\{V_n : n \in \mathbb{N}\}$ of neighborhoods with the property that if $x_n \in V_n$ for each n, then the set $\{x_n : n \in \mathbb{N}\}$ is
	 contained in a compact subset of $X$. A space $X$ is a $q$-space if for each point $x \in X$, there exists a
	 sequence $\{U_n : n \in \mathbb{N}\}$ of neighborhoods of $x$ such that if $x_n \in U_n$ for each $n$, then
	 $\{x_n : n \in \mathbb{N}\}$ has a cluster point. Another property stronger than being a $q$-space is
	 that of being an $M$-space, which can be characterized as a space that can be mapped
	 onto a metric space by a quasi-perfect map (a continuous closed map in which inverse
	 images of points are countably compact). A space $X$ is called a $p$-space if there exists
	 a sequence $(\mathcal{U}_n)$ of families of open sets in a compactification of $X$ such that each
	 $\mathcal{U}_n$ covers $X$ and
	 $\bigcap\limits_{n\in \mathbb{N}}
	 \bigcup\{U \in \mathcal{U}_n : x \in U\} \subseteq X$ for any $x \in  X$ \cite{MKJ}.\\

	A space $X$ is a bi-sequential space if whenever a filter base $\mathcal{F}$ has
	a cluster point $x$ in $X$, then there is a decreasing sequence $\{A_n\}$ of sets
	in $X$ converging to $x$ and such that $F$ intersects $A_n$, for all n and for all
	$F\in \mathcal{F}$. A Space $X$ is a countably bi-sequential space, whenever $\{F_n\}$ is a
	decreasing sequence of sets in $X$ having $x$ as a common accumulation
	point, there exists a decreasing sequence $\{A_n\}$ which converges to $x$
	and such that $A_n$ intersects $F_n$ for all n. A topological space $X$ is called a Fr\'echet  space if for each $A\subseteq X$, and $x \in \cl_X A$
	 there is a sequence $(a_n)$ in $A$ converging to $x$; $X$ is called a sequential space if $A \subseteq X$
	 is closed provided $A$ contains the limits of all convergent sequences from A; $X$ is
	 called a $k$-space if $A \subseteq X$ is closed provided $A \cap K$ is closed in $K$ for each compact
	 subset $K$ of $X$; $X$ is said to be countably tight if for each $A \subseteq X$, and $x \in \cl_X A$ there is
	 a countable subset $B$ of $A$ with $x \in \cl_X
	  B$. A space $X$ is a $k '$-space if a point $x$ is an accumulation
	 point of a set $A$ in $X$ then $x$ is an accumulation point of $A\cap K$ for some
	 compact set $K$. A space $X$ is strongly $k '$ if $\{A_n\}$ is a decreasing sequence
	 of sets each having $x$ as a common accumulation point, then there exists
	 a compact set $K$ such that $x \in \cl (K\cap A_n)$ for all $n$ \cite{RMJ}. \\

A space $X$ is called a radial space if whenever $A \subseteq X$ and $x \in \cl_X A$, then there is an ordinal $\kappa$ and a $\kappa$-sequence $(x_\sigma )_{\sigma <\kappa}$ in $A$ such that $x$ is a limit of the sequence. A space is called pseudoradial if whenever $A \subseteq X$ is not closed then there is an ordinal $\kappa$ and a convergent $\kappa$-sequence $(x_\sigma )_{\sigma <\kappa}$ in $A$ whose limit does not belong to $A$ \cite{MKJ}.\\

A space $X$ is called \v{C}ech-complete if $X$ is a $G_\delta$-set in $\beta X$, where $\beta X$ is the Stone-\v{C}ech compactification of $X$. A space $X$ is called locally \v{C}ech-complete if every point
$x\in X$ has a \v{C}ech-complete neighborhood. A topological space X is called hereditarily Baire if every closed subspace of $X$ is a Baire space.\\
  
	 Thus we have the following categorical diagram (see  \cite{MKJ}, \cite{SH}, \cite{RMJ}):

%	For any topological space the following diagram holds: 
\begin{figure}[H]
	\begin{center}
		\begin{tikzpicture}[node distance=2cm]
			\node (11) {Pointwise Countable Type};
			\node (12) [below of=11, yshift=0.5cm] {Countable Type};
			\node (13) [below of=12, yshift=0.5cm] {Metric Space};
			\node (14) [below of=13, yshift=0.5cm] {First Countable Space};
			\node (15) [below of=14, yshift=0.5cm] {Bi-sequential Space};
			\node (16) [below of=15, yshift=0.5cm] {Countable Bi-sequential Space};
			\node (17) [below of=16, yshift=0.5cm] {Fr\'echet Space};
			\node (18) [below of=17, yshift=0.5cm] {Sequential Space};
			\node (28) [right of=18, xshift=2.5cm] {Countably Tight};
			\node (24) [right of=14, xshift=2.5cm] {Complete metric space};
		
			\node (23) [above of=24, yshift=-0.5cm] {\v{C}ech-complete};
			\node (22) [above of=23, yshift=-0.5cm] {Locally \v{C}ech-complete};
			\node (25) [below of=24, yshift=-0.5cm] {Countable $\pi$-character};
			\node (21) [above of=22, yshift=-0.4cm] {Baire Space};
			\node (01) [left of=11, xshift=-2.5cm] {$r$-space};
			\node (02) [below of=01, yshift=0.5cm] {$M$-space};
			\node (03) [below of=02, yshift=0.5cm] {$p$-space};
			\node (04) [below of=03, yshift=0.5cm] {Radial Space};
			\node (05) [below of=04, yshift=0.5cm] {Pseudoradial Space};
			\node (06) [below of=05, yshift=0.5cm] {Strongly $k'$-space};
			\node (07) [below of=06, yshift=0.5cm] {$k'$-space};
			\node (08) [below of=07, yshift=0.5cm] {$k$-space};
			\node (00) [left of=02, xshift=-0.3cm] {$q$-space};
			\draw [arrow] (13) -- (12);
			\draw [arrow] (12) -- (11);
			\draw [arrow] (13) -- (14);
			\draw [arrow] (14) -- (15);
			\draw [arrow] (15) -- (16);
			\draw [arrow] (14) -- (25);
			\draw [arrow] (16) -- (17);
			\draw [arrow] (17) -- (18);
			\draw [arrow] (18) -- (28);

			\draw [arrow] (23) -- (22);
			\draw [arrow] (22) -- (21);
			\draw [arrow] (24) -- (13);
			\draw [arrow] (13) -- (02);
			\draw [arrow] (13) -- (03);
			\draw [arrow] (13) -- (04);
			\draw [arrow] (04) -- (05);
			\draw [arrow] (11) -- (01);
			\draw [arrow] (01) -- (00);
			\draw [arrow] (02) -- (00);
			\draw [arrow] (03) -- (00);
			\draw [arrow] (16) -- (06);
			\draw [arrow] (06) -- (07);
			\draw [arrow] (07) -- (08);
			\draw [arrow] (17) -- (07);
			\draw [arrow] (24) -- (23);
			\draw [arrow] (18) -- (08);
			\draw (-7.6,0.5)--(6.7,0.5)--(6.7,-11)--(-7.6,-11)--(-7.6,0.5);
		\end{tikzpicture}
	\end{center}
\end{figure}
 Now let us recall the definitions of some well-known cardinal functions.\\ 
 
 For any space \( X \) and any point \( x \) in \( X \), the \textit{character} of \( x \) in \( X \), denoted by 
$
 \chi(X,x),
 $
 is defined by
 \[
 \chi(X,x) = \aleph_0 + \min\{|\mathscr{B}_x| : \mathscr{B}_x \text{ is a base for } X \text{ at } x\}.
 \]
 
 The \textit{character} \( \chi(X) \) of \( X \) is defined by
 $
 \chi(X) = \sup\{\chi(X,x) : x \in X\}
 $. Clearly a space \( X \) is \textit{first countable} if \( \chi(X) = \aleph_0 \).
 
 The \textit{weight} of a space \( X \) is defined by
 \[
 w(X) = \aleph_0 + \min\{|\mathscr{B}| : \mathscr{B} \text{ is a base for } X\}.
 \]
 
 A space \( X \) is \textit{second countable} if and only if \( w(X) = \aleph_0 \).
 
 The \textit{density} \( d(X) \) of a space \( X \) is defined by
 \[
 d(X) = \aleph_0 + \min\{|D| : D \text{ is a dense subset of } X\}.
 \]
 
 A space \( X \) is \textit{separable} if and only if \( d(X) = \aleph_0 \).
 
 The \textit{Lindelöf number} \( L(X) \) of \( X \) is defined by
 \[
 L(X) = \aleph_0 + \min\{ \mathfrak{m} : \text{every open cover of } X \text{ has a subcover of cardinality } \leq \mathfrak{m} \}.
 \]
 
 A space is \textit{Lindelöf} if and only if \( L(X) = \aleph_0 \).

 For a space $X$, the cellularity of $X$, denoted by $c(X)$, is defined by $$c(X) =
 \aleph_0 + \sup\{|\mathcal{U} | : \mathcal{U} \text{ is a family of pairwise disjoint nonempty open subsets of } X\}.$$ A
 space $X$ has countable chain condition if and only if $c(X) =\aleph_0$. 
 
  We reproduce the following definitions related to various types of boundedness from \cite{KH}. A topological group $(X,\cdot,\tau)$ is said to be:
 \begin{enumerate}
 	\item  $\aleph_0$-bounded or $\omega$-$narrow$ if for each neighborhood $V$ of the identity element $e\in X$ there is a countable set $A \subseteq X$ such that $X = A \cdot V$.
 	\item $Menger$ $bounded$ or $M$-$bounded$ if if for each sequence $\{V_n:n\in \mathbb{N}\}$ of
 	neighborhoods of the identity element $e \in X$ there is a sequence $\{A_n:n\in \mathbb{N}\}$ of finite subsets of $X$ such that $X=\bigcup\limits_{n\in \mathbb{N}} A_n\cdot V_n$.
 	\item $ Hurewicz$ $bounded$ or $H$-$bounded$ if for each sequence $\{V_n:n\in \mathbb{N}\}$ of
 	neighborhoods of the identity element $e \in X$ there is a sequence $\{A_n:n\in \mathbb{N}\}$ of finite subsets of
 	$X$ such that each $x \in X$ belongs to all but finitely many sets $A_n\cdot V_n$.
 	\item $Rothberger$ $bounded$ or $R$-$bounded$ if for each sequence $\{V_n:n\in \mathbb{N}\}$
 	of neighborhoods of the identity element $e \in X$ there is a sequence $\{x_n:n\in \mathbb{N}\}$ of elements of $X$
 	such that $X =\bigcup\limits_{n\in \mathbb{N}}	x_n \cdot V_n$.
 	
 \end{enumerate}
 
 To each of the above properties one can correspond a game on $X$. Let us
 give an idea of this for $M$-$boundedness$. Two players, A and B, play a round
 for each $n\in \mathbb{N}$. In the $n$-th round player A chooses a basic neighborhood $V_n$ of
 identity element $e\in X$, and B responds by choosing a finite set $A_n$ in $X$. B wins a play
 $$V_1, A_1;V_2, A_2; \cdots ;V_n, A_n; \cdots$$
 if $X =\bigcup\limits_{n\in \mathbb{N}} A_n · U_n$; otherwise A wins.

 A topological group $(X, \cdot, \tau )$ is said to be $strictly$ $M$-$bounded$ if B has a
 winning strategy in the above game.

 \section{I-pseudocompact spaces}
 
 We recall that for a given ideal $I$ in $C(X)$, we call the space $X$ $I$-$pseudocompact$ when $X\setminus \bigcap Z[I] $ is a bounded subset of $X$. It is plain that a pseudocompact space $X$ is $I$-$pseudocompact$ with $I=C(X)$. In this section we intend to construct various non pseudocompact spaces $X$, which are $I$-$pseudocompact$ for appropriate choices of the proper ideal $I$ in $C(X)$. It is trivial that if $X$ is $I$-$pseudocompact$, then it is also $J$-$pseudocompact$ for any ideal $J$ of $C(X)$ contained in $I$. But it seems to be fascinating problem that, if $X$ is $I$-$pseudocompact$, then does there exist an ideal $J$ in $C(X)$ with $I \subsetneqq J$ for which $X$ is also $J$-$pseudocompact$. We have given an affirmative answer to this question for a large class of spaces $X$. The following result gives possible candidates for $I$ to render $X$, an $I$-$pseudocompact$ space.
 \begin{theorem}\label{6.1}
 	Suppose $I$ is an ideal in $C(X)$ for which $X$ is $I$-$pseudocompact$. Then $I\subseteq C_\psi(X)\equiv$ the ideal of all functions in $C(X)$ with pseudocompact support.
 \end{theorem}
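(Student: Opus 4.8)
The plan is to show that every $f\in I$ has pseudocompact support, i.e. that $F:=\cl_X\big(X\setminus Z(f)\big)$ is a pseudocompact subspace of $X$. First I would reduce this to a statement about the cozero set $U:=X\setminus Z(f)$ alone. Since $f\in I$ we have $\bigcap Z[I]\subseteq Z(f)$, whence $U=X\setminus Z(f)\subseteq X\setminus\bigcap Z[I]$. As $X$ is $I$-$pseudocompact$, the set $X\setminus\bigcap Z[I]$ is bounded in $X$, and any subset of a bounded set is bounded; hence $U$ is bounded in $X$. The theorem therefore reduces to the purely topological assertion: \emph{if a cozero set $U$ of $X$ is bounded, then its closure $F=\cl_X U$ is pseudocompact.}

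To prove this I would use two standard characterizations via locally finite families. On the one hand, the Tychonoff subspace $F$ is pseudocompact if and only if every locally finite family of nonempty open subsets of $F$ is finite. On the other hand, a subset $B$ of $X$ is bounded if and only if every family of open subsets of $X$ that is locally finite in $X$ has only finitely many members meeting $B$: if $B$ is unbounded one produces an infinite such family from the pairwise disjoint interval-preimages of some $g\in C(X)$ unbounded on $B$, while from an infinite such family one builds an unbounded $g=\sum_n h_n$ out of bump functions $h_n$ supported in its members (local finiteness making the sum continuous). Applying the second characterization with $B=U$ is exactly what a putative failure of pseudocompactness of $F$ will have to contradict.

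So suppose, for contradiction, that $F$ is not pseudocompact, and fix an infinite locally finite family $\{V_n:n\in\mathbb{N}\}$ of nonempty open subsets of $F$. Since $U$ is open in $X$ and dense in $F$, each $W_n:=V_n\cap U$ is a nonempty open subset of $X$ with $W_n\subseteq U$. The crucial step --- and the part I expect to be the main obstacle --- is to verify that $\{W_n\}$ is locally finite not merely in $F$ but in all of $X$. For $x\in X\setminus F$ the open set $X\setminus F$ is a neighbourhood meeting no $W_n$. For $x\in F$, local finiteness in $F$ furnishes an open $O\subseteq X$ with $x\in O$ and $O\cap F$ meeting only finitely many $V_n$; since $W_n\subseteq V_n\subseteq F$ yields $O\cap W_n\subseteq (O\cap F)\cap V_n$, the neighbourhood $O$ meets only finitely many $W_n$. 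The delicate subcase is $x\in F\setminus U$, where the $F$-neighbourhood of $x$ supplied by local finiteness need not itself be an $X$-neighbourhood; the point is that enlarging it to an ambient open $O$ with $O\cap F$ equal to that neighbourhood still controls the $W_n$, precisely because every $W_n$ lies in $F$.

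Once $\{W_n\}$ is seen to be an infinite, locally finite (in $X$) family of nonempty open sets, each contained in --- hence meeting --- $U$, the boundedness characterization is violated, contradicting the boundedness of $U$ established in the first step. Hence $F=\cl_X U$ is pseudocompact, so $f\in C_\psi(X)$; as $f\in I$ was arbitrary, $I\subseteq C_\psi(X)$, which completes the proof.
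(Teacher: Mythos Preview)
Your proof is correct and shares its opening reduction with the paper: both observe that $f\in I$ gives $X\setminus Z(f)\subseteq X\setminus\bigcap Z[I]$, hence $X\setminus Z(f)$ is bounded in $X$. The divergence is in how each argument passes from this to pseudocompactness of the support. The paper does it in two strokes: first, the closure of a bounded set is bounded (immediate from continuity), so the support $\cl_X(X\setminus Z(f))$ is bounded; second, it invokes Mandelker's theorem that the support of a continuous function is bounded in $X$ if and only if it is pseudocompact. Your route instead reproves the needed direction of Mandelker's result from scratch, via the locally-finite-family characterizations of pseudocompactness and of boundedness, transplanting a hypothetical infinite locally finite family in $F$ to one in $X$ that witnesses unboundedness of $U$.

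What each approach buys: the paper's proof is two lines because it outsources the real content to \cite{m}; yours is self-contained and makes the mechanism transparent, at the cost of length. One small remark on your write-up: the ``delicate subcase $x\in F\setminus U$'' you flag is not actually delicate --- your own argument already handles every $x\in F$ uniformly, since any $F$-neighbourhood of $x$ is of the form $O\cap F$ for some open $O\subseteq X$, and $W_n\subseteq F$ forces $O\cap W_n\subseteq (O\cap F)\cap V_n$. There is no separate issue to resolve there.
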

 \begin{proof}
 	Let $f\in I$. Then $X\setminus Z(f)\subseteq X\setminus \bigcap Z[I]$. Since $X $ is $I$-$pseudocompact$, it follows that $X\setminus Z(f)$ is a bounded subset of $X$ and hence $\cl_X(X\setminus Z(f))\equiv$ support of f is a bounded subset of $X$. It is a standard result proved by Mandelker in 1971 \cite{m} that the support of a continuous function over $X$ is a bounded subset of $X$ if and only if it is a pseudocompact subset of $X$. It follows that $f\in C_\psi(X)$.
 \end{proof}
 \begin{remark}
 	It is easy to construct an example of an ideal $I $ contained in $C_\psi(X)$ for a suitable space $X$ for which $X$ is not $I$-$pseudocompact$. Indeed if we take $X=\mathbb{R}$, then $C_\psi(X)=C_k(X)=$ the ideal of all functions in $C(X)$ with compact support. Since $X=\mathbb{R}$ is locally compact, $I=C_k(X)$ is a free ideal in $C(X)$ \cite[Exercise 4D3]{GJ} which means that $\bigcap Z[I]=\emptyset$, clearly then $X\setminus \bigcap Z[I]=X=\mathbb{R}$ is never a bounded subset of $X$ and therefore $\mathbb{R}$ is not $I$-$pseudocompact$.
 \end{remark}
 
 The next result determines completely the class of spaces $X$, for which $X$ becomes $I$-$pseudocompact$ for some non zero ideal $I$ in $C(X)$.
 \begin{theorem}\label{6.3}
 	The following statements are equivalent for a space $X$.
 	\begin{enumerate}
 		\item There exists a non zero ideal $I$ in $C(X) $ such $X$ is $I$-$pseudocompact$.
 		\item There exists at least one $x\in X$ such that $X$ is locally pseudocompact at $x$.
 	\end{enumerate}
 \end{theorem}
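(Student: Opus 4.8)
The plan is to prove the two implications separately; the implication $(1)\Rightarrow(2)$ will be a quick consequence of Theorem \ref{6.1}, while $(2)\Rightarrow(1)$ requires me to build a suitable non-zero ideal by hand. Throughout, for $f\in C(X)$ I write $X\setminus Z(f)$ for the cozero set and $\cl_X(X\setminus Z(f))$ for its support.

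For $(1)\Rightarrow(2)$ I would start from a non-zero ideal $I$ for which $X$ is $I$-pseudocompact, and choose $f\in I$ with $f\neq\underline{0}$, so that $f(c)\neq0$ for some $c\in X$ and thus $c\in X\setminus Z(f)$. By Theorem \ref{6.1} we have $I\subseteq C_\psi(X)$, hence $f\in C_\psi(X)$ and its support $\cl_X(X\setminus Z(f))$ is pseudocompact. Since $X\setminus Z(f)$ is an open set containing $c$ and sits inside its support, the point $c$ possesses the pseudocompact neighbourhood $\cl_X(X\setminus Z(f))$; therefore $X$ is locally pseudocompact at $c$, which is $(2)$. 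This direction is essentially one line once Theorem \ref{6.1} is available.

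For $(2)\Rightarrow(1)$ I would fix a point $x_0$ with a pseudocompact neighbourhood $N$ and put $W=\operatorname{int}N$, so that $x_0\in W\subseteq N$ with $W$ open. Applying complete regularity of the Tychonoff space $X$ to $x_0$ and the closed set $X\setminus W$, I would produce $f\in C(X)$ with $0\leq f\leq1$, $f(x_0)=1$, and $f\equiv0$ on $X\setminus W$; then $X\setminus Z(f)\subseteq W\subseteq N$ and $f\neq\underline{0}$. I would take $I=\{fg:g\in C(X)\}$, the principal ideal generated by $f$, which is non-zero. A direct computation gives $\bigcap Z[I]=Z(f)$: every $Z(fg)$ contains $Z(f)$, while $Z(f)=Z(f\cdot1)$ already lies in $Z[I]$. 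Consequently $X\setminus\bigcap Z[I]=X\setminus Z(f)\subseteq N$.

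It then remains to see that $X\setminus Z(f)$ is a bounded subset of $X$, for which I would invoke the elementary fact that a pseudocompact subspace is bounded: for any $g\in C(X)$ the restriction $g|_N$ is continuous on the pseudocompact space $N$ and hence bounded, so $N$ is bounded and therefore so is its subset $X\setminus Z(f)$. This shows that $X$ is $I$-pseudocompact and finishes $(2)\Rightarrow(1)$. I expect the reverse implication to carry the real content, with the only delicate steps being the use of complete regularity to force the cozero set of the test function inside the pseudocompact neighbourhood, and the observation that a pseudocompact subspace is a bounded subset. Notably, Mandelker's theorem relating bounded supports to pseudocompact supports (which underlies Theorem \ref{6.1}) is not needed in this direction, since $X\setminus Z(f)$ is bounded directly as a subset of $N$.
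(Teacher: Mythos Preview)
Your proof is correct and follows essentially the same route as the paper: both directions use Theorem \ref{6.1} for $(1)\Rightarrow(2)$ and complete regularity plus the principal ideal $I=\langle f\rangle$ for $(2)\Rightarrow(1)$. The only cosmetic difference is that in $(2)\Rightarrow(1)$ the paper asserts the support $\cl_X(X\setminus Z(f))$ is pseudocompact and deduces boundedness from that, whereas you argue more directly that $X\setminus Z(f)\subseteq N$ with $N$ pseudocompact (hence bounded), which is in fact slightly cleaner.
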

 \begin{proof}
 	$(1)\Rightarrow (2)$: Let $X$ be $I$-$pseudocompact$ for some ideal $I\neq\{0\}$ in $C(X)$. Now by Theorem \ref*{6.1}, $I\subseteq C_\psi(X)$. Choose $f\in I$ for some $f\neq0$. Then $X\setminus Z(f)\neq \emptyset$. Choosing any point $x\in X\setminus Z(f)$, we see that $\cl_X(X\setminus Z(f))\equiv$ support of $f$ is a pseudocompact neighborhood of $x$, in other words $X$ is locally pseudocompact at $x$.
 	
 	$(2)\Rightarrow (1)$: Suppose $X$ is locally pseudocompact at some $x_0\in X$. Since $X$ is Tychonoff, there exists $f\in C(X)$ such that $x_0\in X\setminus Z(f)$ and $\cl_X(X\setminus Z(f))$ is pseudocompact, consequently $X\setminus Z(f)$ is a bounded subset of $X$. Let $I=\langle f \rangle\equiv$ the principal ideal generated by $f$ in $C(X)$, then $I\neq\{\underline{0}\}$ and $\bigcap Z[I]=Z(f)\implies X\setminus \bigcap Z[I]=X\setminus Z(f)$, a bounded subset of $X$. Thus $X$ becomes $I$-$pseudocompact$. 
 \end{proof}
  An ideal $I$ in $C(X)$ is called fixed if  $\bigcap Z[I]\equiv\bigcap \limits_{g\in I} Z(g)\neq \emptyset$, otherwise $I$ is called a free ideal in $C(X)$. It is trivial that if $I$ is a free ideal in $C(X)$ and $X$ is $I$-$pseudocompact$, then $X$ is pseudocompact. In fact more generally if $I$ is an essential ideal in $C(X)$ (this means that $I$ cuts every nonzero ideal in $C(X)$ non trivially) and $X$ is $I$-$pseudocompact$, then $X$ is pseudocompact. [An ideal $I$ in $C(X)$ is an essential ideal in $C(X)$ if and only if $\bigcap Z[I]$ is no where dense in $X$ \cite{AA}. Each free ideal in $C(X)$ is an essential ideal]. Therefore for a non pseudocompact space $X$, if $X$ is $I$-$pseudocompact$ for a non zero ideal $I$ in $C(X)$, then $I$ must be a fixed ideal in $C(X)$. If in addition $X$ is locally pseudocompact, then $C_\psi(X)$ is a free ideal in $C(X)$ \cite[Remark 4.3]{SB} and therefore any ideal $I\neq\{\underline{0}\}$ in $C(X)$ which makes $X$, $I$-$pseudocompact$ is properly contained in $C_\psi(X)$.
 
 The following fact shows that for such a space $X$, there is no largest ideal $I$ (contained in $C_\psi(X)$) for which $X$ is $I$-$pseudocompact$.
 
 \begin{theorem}
 	Let $X $ be a locally pseudocompact non pseudocompact space. Then given any ideal $I$ in $C(X)$ such that $X$ is $I$-$pseudocompact$, there exists an ideal $J$ in $C(X)$ such that $I\subsetneqq J$ and $X$ is $J$ pseudocompact and therefore there exists an increasing sequence of ideals $I_1\subsetneqq I_2\subsetneqq I_3\cdots$ of $C(X)$ such that $X$ becomes $I_n$-pseudocompact for each $n\in \mathbb{N}$.
 \end{theorem}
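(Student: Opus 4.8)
The plan is to build $J$ by enlarging $I$ with a single well-chosen function and then to iterate this construction. The starting observation is that, under the standing hypotheses, $I$ is \emph{properly} contained in $C_\psi(X)$: by Theorem \ref{6.1} we have $I \subseteq C_\psi(X)$, while the discussion preceding the theorem shows that $C_\psi(X)$ is a free ideal (as $X$ is locally pseudocompact), so that $\bigcap Z[C_\psi(X)] = \emptyset$; since $X$ is not pseudocompact, $X \setminus \bigcap Z[C_\psi(X)] = X$ fails to be bounded, whence $X$ is not $C_\psi(X)$-pseudocompact and therefore $I \neq C_\psi(X)$. Consequently I would pick a function $h \in C_\psi(X) \setminus I$ (note $h \neq 0$), and set $J$ to be the ideal generated by $I$ together with $h$, so that every element of $J$ has the form $a + rh$ with $a \in I$ and $r \in C(X)$.

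Since $h \in J \setminus I$, we immediately get $I \subsetneqq J$. The main computational step is to identify $\bigcap Z[J]$. Because $I \subseteq J$ and $h \in J$, every $x \in \bigcap Z[J]$ lies in $\bigcap Z[I] \cap Z(h)$; conversely, if $x \in \bigcap Z[I] \cap Z(h)$ then $(a + rh)(x) = 0$ for each generator, so $x \in \bigcap Z[J]$. Hence $\bigcap Z[J] = \bigcap Z[I] \cap Z(h)$ and therefore
$$X \setminus \bigcap Z[J] = \big(X \setminus \bigcap Z[I]\big) \cup \big(X \setminus Z(h)\big).$$
Now $X \setminus \bigcap Z[I]$ is bounded because $X$ is $I$-pseudocompact, while $X \setminus Z(h) \subseteq \cl_X(X \setminus Z(h))$, the support of $h$, which is pseudocompact (as $h \in C_\psi(X)$) and hence bounded by Mandelker's theorem already invoked in the proof of Theorem \ref{6.1}. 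Since a finite union of bounded subsets of $X$ is again bounded, $X \setminus \bigcap Z[J]$ is bounded and $X$ is $J$-pseudocompact, as desired.

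To produce the strictly increasing sequence I would iterate, setting $I_1 = I$ and obtaining $I_{n+1}$ from $I_n$ exactly as above by adjoining a generator $h_n \in C_\psi(X) \setminus I_n$. The point that makes the recursion self-sustaining is that at each stage the ideal obtained is again \emph{properly} below $C_\psi(X)$: each $I_{n+1}$ is a non-zero ideal (it contains $h_n \neq 0$) for which $X$ is $I_{n+1}$-pseudocompact, so by the very argument applied to $I$ it cannot coincide with the free ideal $C_\psi(X)$, leaving room to choose $h_{n+1} \in C_\psi(X) \setminus I_{n+1}$ at the next step.

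I expect the only genuinely delicate point to be the bookkeeping in this recursion, namely verifying that the strict containment $I_n \subsetneqq C_\psi(X)$ is preserved so that a new generator can always be found; everything else, the zero-set computation and the stability of boundedness under finite unions and under passage to supports, is routine.
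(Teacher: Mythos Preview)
Your proof is correct and follows essentially the same approach as the paper: choose $h\in C_\psi(X)\setminus I$, set $J=\langle I,h\rangle$, compute $\bigcap Z[J]=\bigcap Z[I]\cap Z(h)$, and conclude that $X\setminus\bigcap Z[J]$ is a finite union of bounded sets. You are slightly more explicit than the paper in justifying the strict containment $I\subsetneqq C_\psi(X)$ and in spelling out the recursion for the increasing chain, but the underlying idea is identical.
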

 \begin{proof}
 	Suppose $I\neq\{\underline{0}\}$ is an ideal in $C(X)$ such that $X$ is $I$-$pseudocompact$. The existence of such ideal is ensured from Theorem \ref*{6.3}. Then from the comments made preceding the statement of the present theorem, we can say that $I\subsetneq C_\psi(X)$. Choose $f\in C_\psi(X)\setminus I$ and set $J=\langle I,f\rangle $$\equiv$ the smallest ideal in $C(X)$ containing $I$ and the function $f$. Then $\bigcap Z[J]=\bigcap Z[I]\cap Z(f)$. This implies that $X\setminus \bigcap Z[J]=(X\setminus \bigcap Z[I])\cup (X\setminus Z(f))	$. Since $X$ is $I$-$pseudocompact$, $X\setminus \bigcap Z[I]$ is a bounded subset of $X$, on the other hand $f\in C_\psi(X)$ implies $\cl_X(X\setminus Z(f ))$ is a pseudocompact subset of $X$, equivalently a bounded subset of $X$. Altogether, $X\setminus \bigcap Z[J]$ becomes a bounded subset of $X$. Hence $X$ is $J$-$pseudocompact$.
 \end{proof}
  We shall conclude this section after furnishing two characterization of $I$-$pseudocompact$ space $X$, in terms of two chosen subrings of $C(X)$. 
  
  For the given ideal $I$ in $C(X)$, let $$C_{\psi^I}(X)=\{f\in C(X): \overline{(X\setminus Z(f))\cap (X\setminus\bigcap Z[I])  }\text{ is a bounded subset of } X \}$$ It is easy to see that with $I=C(X)$, $C_{\psi^I}(X)$ reduces to the ideal $C_\psi(X)$ of $C(X)$. It needs a routine computation to check that $C_{\psi^I}(X)$ is an ideal (may be improper) in $C(X)$ and $C_\psi(X)\subseteq C_{\psi^I}(X)$.
  \begin{theorem}\label{Ipsi}
  For an ideal $I$ in $C(X)$, $X$ is $I$-$pseudocompact$ if and only if $C_{\psi^I}(X)=C(X)$. 
  \end{theorem}
\begin{proof}
	Let $X$ be $I$-$pseudocompact$, then $X\setminus \bigcap Z[I]$ is a bounded subset of $X$ and hence, $\overline{X\setminus\bigcap Z[I]}$ is also bounded subset of $X$. It follows that for any $f\in C(X)$, $\overline{(X\setminus Z(f))\cap (X\setminus\bigcap Z[I])}$ is a bounded subset of $X$ and hence $C(X)=C_{\psi^I}(X)$. Conversely let, $C(X)=C_{\psi^I}(X)$, then  for the constant function $\underline{1}$, $\overline{(X\setminus Z(\underline{1}))\cap (X\setminus\bigcap Z[I])}$ is a bounded subset of $X$, meaning that ${X\setminus\bigcap Z[I]}$ is bounded subset of $X$, hence $X$ is $I$-$pseudocompact$. 
\end{proof}
The following alternative description of $C_{\psi^I}(X)$ will be helpful to us towards further investigation on this ideal in $C(
X)$.
\begin{theorem}\label{Eqcpsi}
	$C_{\psi^I}(X)=\{f\in C(X): \text{ for each }g\in C(X), fg\text{ is bounded on }X\setminus\bigcap Z[I]\}$. (with the choice of $I=C(X)$, this reduces to $C_\psi(X)=\{f\in C(X):\text{ for each }g\in C(X)\text{ }fg\text{ is}\newline\text{ bounded on }X\}$, a standard result obtained in \cite{JM})
\end{theorem}
\begin{proof}
	If $f\in C(X)$ is such that for each $g\in C(X)$, $fg$ is bounded on $X\setminus \bigcap Z[I]$, which surely implies that $f$ is bounded on $\overline{(X\setminus Z(f))\cap (X\setminus\bigcap Z[I])}$ and therefore, $f\in C_{\psi^I}(X)$. Conversely, if $f\in C_{\psi^I}(X)$, then $\overline{(X\setminus Z(f))\cap (X\setminus\bigcap Z[I])}$ is a bounded subset of $X$. This implies for each $g\in C(X)$ that, $fg$ bounded on $\overline{(X\setminus Z(f))\cap (X\setminus\bigcap Z[I])}$. Since $fg$ is zero in $Z(f)$, this further implies that $fg$ is bounded in $X\setminus\bigcap Z[I]$.
\end{proof}
It is standard result in the theory of rings of continuous function that $C_\psi(X)$ is the largest ideal in $C(X)$ contained in $C^*(X)$ \cite[Theorem 3.3.7]{pn}. The next result places this fact on a wider setting. 
 Let $$C_I^*(X)=\{f\in C(X): f\text{ is bounded on }X\setminus\bigcap Z[I]\}$$ It is clear that $C^*(X)\subseteq C^*_I(X)$ and $C^*_I(X)$ is a subring of $C(X)$ and $C_{\psi^I}(X)\subseteq C_I^*(X)$- this last implication follows from Theorem \ref{Eqcpsi}.
 \begin{theorem}
 $C_{\psi^I}(X)$ is the largest ideal in $C(X)$ contained in $C_I^*(X)$.
 
 \end{theorem}
 \begin{proof}
 	It is already noted that $C_{\psi^I}(X)$ is an ideal in $C(X)$. Let $J$ be an ideal in $C(X)$ contained in $C_I^*(X)$. We need to check that $J\subseteq C_{\psi^I}(X)$. Let $f\in J$. Then for each $g\in C(X)$, $fg\in J$, which implies that $fg\in C_I^*(X)$ and therefore, $fg$ is bounded on $X\setminus\bigcap Z[I]$. This implies in view of Theorem \ref{Eqcpsi} that $f\in C_{\psi^I}(X)$.
 \end{proof}
The following theorem offers several equivalent descriptions of $I$-$pseudocompact$ spaces.
\begin{theorem}
	The following four statements are equivalent for an ideal $I$ in $C(X)$-\begin{enumerate}
		\item $X$ is $I$-$pseudocompact$.
		\item $C^*_I(X)=C(X)$.
		\item $C_{\psi ^I}(X)=C^*_I(X)$.
		\item $C_I^*(X)$ is an ideal in $C(X)$.
	\end{enumerate}
\end{theorem}
\begin{proof}
	$(1)\Rightarrow (2)$ follows from Theorem \ref{Ipsi} and the fact that $C_{\psi^I}(X)\subseteq C^*_I(X)$. If $(2)$ holds, then each $f\in C(X)$ is bounded on $X\setminus\bigcap Z[I]$ and this means that $X$ is $I$-$pseudocompact$. Thus $(1)\iff (2)$. $(1)\Rightarrow(3)$ follows from Theorem \ref{Ipsi} and the equivalence of $(1)\iff (2)$. $(3)\Rightarrow(4)$ is trivial because $C_{\psi^I}(X)$ is already an ideal in $C(X)$. $(4)\Rightarrow(1)$: Let $(4)$ be true and $f\in C(X)$. Then since $1\in C_I^*(X)$, it follows that $f\in C_I^*(X)$ and hence $f$ is bounded on $X\setminus \bigcap Z[I]$. Since $f\in C(X)$ is arbitrary, it follows that $X\setminus\bigcap Z[I]$ is a bounded subset of $X$ i.e., $X$ is $I$-$pseudocompact$.
\end{proof}
 \section{ Metrizability of $C_{u^I}(X)$ and $C_{m^I}(X)$ }
 
 \begin{theorem}\label{m}
 	 	The space $C_{u^I}(X)$ is metrizable, indeed the function $d:C(X)\times C(X)\rightarrow[0,\infty)$ given by the formula $$d(f,g)=\begin{cases}
 		1,\text{ when }f-g\notin I\\
 	\min\{	\sup\{|f(x)-g(x)|:x\in X\setminus \bigcap Z[I]\}, 1\}, \text{ when } f-g\in I
 	\end{cases}$$ is a metric on $C(X)$, which defines the $u^I$-topology on $C(X)$.
 \end{theorem}

The proof of this theorem is routine and therefore omitted.\\

The following simple lemma will be helpful towards proving the equivalence of a number of statements in this section.
\begin{lemma}\label{Lemma}
	Let $I$ be an ideal in $C(X)$ and $c\in X\setminus \bigcap Z[I]$. Then there exists $f\in I$ such that $|f|\leq1$ on $X$ and $f(c)=\frac{1}{2}$.
	
\end{lemma}
\begin{proof}
	$c\in X\setminus \bigcap Z[I]$ implies that there exists a $g\in I$ such that $g(c)\neq 0$. Let $k=\frac{1}{g(c)}g$, then $k\in I$. Suppose $f=\frac{k}{1+|k|}$, then $f\in I$, $|f|\leq1$ on $X$ and $f(c)=\frac{1}{2}$.
\end{proof}
\begin{theorem}\label{Ipsu}
	Given an ideal $I$ in $C(X)$, the following statements are equivalent:
	\begin{enumerate}
		\item $X$ is $I$-pseudocompact.
		\item $C_{u^I}(X)=C_{m^I}(X)$.
		\item $C_{m^I}(X)$ is first countable.
		\item $C_{u^I}(X)$ is a topological ring.
		\item $C_{m^I}(X)$ is a bi-sequential space.
		\item $C_{m^I}(X)$ is a countably bi-sequential space.
		\item $C_{m^I}(X)$ is a Fr\'{e}chet space.
		\item $C_{m^I}(X)$ is a sequential space.
		\item $C_{m^I}(X)$ is strongly-$k'$.
		\item $C_{m^I}(X)$ is a $k'$-space.
		\item $C_{m^I}(X)$ is a k-space.
		\item $C_{m^I}(X)$ is countably tight.
		\item $C_{m^I}(X)$ is a metrizable space.
		\item $C_{m^I}(X)$ is an M-space.
		\item $C_{m^I}(X)$ is a p-space.
		\item $C_{m^I}(X)$ is an r-space.
		\item $C_{m^I}(X)$ is a q-space.
		\item $C_{m^I}(X)$ is of countable type.
		\item $C_{m^I}(X)$ is of pointwise countable type.
		\item $C_{m^I}(X)$ has a dense subset of pointwise countable type.
		\item $C_{m^I}(X)$ has a countable $\pi$-character.
		\item $C_{m^I}(X)$ has a dense subset which is of countable $\pi$-character.
		\item $C_{m^I}(X)$ is a radial space.
		\item $C_{m^I}(X)$ is a pseudoradial space.
		
	\end{enumerate}
	
\end{theorem}
\begin{proof}
	$(3)\Rightarrow (5)\Rightarrow(6)\Rightarrow(7)\Rightarrow(8)\Rightarrow(12)$,
	
	$(3)\Rightarrow(5)\Rightarrow(6)\Rightarrow(9)\Rightarrow(10)\Rightarrow(11)$,
	
	$(13)\Rightarrow (14)\Rightarrow(17)$,
	
	$(13)\Rightarrow(15)\Rightarrow(17)$,
	
	$(13)\Rightarrow(18)\Rightarrow(19)\Rightarrow(16)\Rightarrow(17)$,
	$(13)\Rightarrow(23)\Rightarrow(24)$,
	
	All the above implications follow from the diagram of arrows drawn in Section 2.
	
	$(1)\Rightarrow(2)$: Established in Theorem 3.7 in \cite{PA2023}.
	
	$(2)\Rightarrow(3) $ and $(2)\Rightarrow (13)$ follow immediately because of Theorem \ref*{m}.
	
	$(2)\Rightarrow (4)$ follows because $C_{m^I}(X)$ is a topological ring.
	
	$(4)\Rightarrow (1)$: Let $f\in C(X)$. We shall show that $f$ is bounded on $X\setminus \bigcap Z[I]$. The hypothesis (4) implies that there exist $\epsilon_1,\epsilon_2>0$ in $\mathbb{R}$ such that $B_u(0,I,\epsilon_1)\cdot B_u(f,I,\epsilon_2)\subseteq B_u(0,I,1)$. Choose $c\in X\setminus \bigcap Z[I] $ arbitrarily. Then by Lemma \ref{Lemma}, there exists a $g\in I$ such that $|g|\leq1$ on $X$ and $g(c)=\frac{1}{2}$. Therefore $\frac{1}{2}\epsilon_1\cdot g\in B_u(0,I,\epsilon_1)$ and it is trivial that $f\in B_u(f,I,\epsilon_2)$. This yields that $\frac{1}{2}\epsilon_1 \cdot g\cdot f\in B_u(0,I,1)$ and hence $|\frac{1}{2}\epsilon_1 g(c)f(c)|<1 $, consequently $|f(c)|<\frac{4}{\epsilon_1}$. Thus $|f(x)|<\frac{4}{\epsilon_1} $ on $X \setminus \bigcap Z[I]$ and therefore $X$ is $I$-$pseudocompact$.
	
	$(11)\Rightarrow(12)$: Since $C_{m^I}(X)$ is a submetrizable space because of Theorem \ref{m}, it follows that each point of $C_{m^I}(X)$ is a $G_\delta$-set. Furthermore, $C_{m^I}(X)$ is a $T_3$ space because every $T_0$ topological group is regular. It is proved in \cite[Theorem 2.1 (l)$\Rightarrow$(m)]{MKJ} that a regular $k$-space in which points are $G_\delta$ is countably tight.
	
	$(12)\Rightarrow(1)$: Suppose $(1)$ is false. Then by Corollary 1.20 in \cite{GJ}, $X\setminus \bigcap Z[I] $ contains a copy of $\mathbb{N}$: $D\equiv\{x_1,x_2,\cdots\}$, C-embedded in $X$. By Lemma \ref{Lemma} for each $n\in \mathbb{N}$, there exists $f_n\in I$ such that $|f_n|\leq 1$ on $X$ and $f_n(x_n)=\frac{1}{2}$. Let $L=\{g\in C(X):g(x_n)\neq 0 \text{ for each }n\in \mathbb{N}\}$ and $G=\{|f_n\cdot g|:n\in \mathbb{N}, g\in L\}.$ Now for any $u\in C_+(X)$, $\frac{1}{2}|f_1\cdot u|\in B_m(0,I,u)\cap G$ and hence $0\in \bar{G}\equiv \cl_{C_{m^I}(X)}G$. We claim that $0$ does not belong to the closure of any countable subset of $G$ in $C_{m^I}(X)$ and hence $C_{m^I}(X)$ is not countably tight. Towards proving the claim let $G_0=\{|f_{n_1}\cdot g_1|,|f_{n_2}\cdot g_2|, \cdots\}$ be any countable subset of $G$, here $g_n\in L$ for each $n\in \mathbb{N}$. Let $\delta:D\rightarrow \mathbb{R} $ be defined as follows: $$\delta(x_{n_i})=\begin{cases}
		\frac{1}{4}|f_{n_i}(x_{n_i})||g_i(x_{n_i})|, \text{ when }i\in \mathbb{N}\\
		1,\text{ otherwise }
	\end{cases}$$ Then $\delta$ has an extension to a $v\in C_+(X)$. We claim that $B_m(0,I,v)\cap G_0=\emptyset$ and hence $0\notin cl_{C_{m^I}(X)}G_0$. Towards proving, we argue by contradiction and assume that some $f_{n_k}\cdot g_k\in B_m(0,I,v)$. Then $|(f_{n_k}\cdot g_k)(x_{n_k})|<v(x_{n_k})$ and hence $\frac{1}{2}|g_k(x_{n_k})|<\frac{1}{4}\cdot\frac{1}{2}|g_k(x_{n_k})|$, a contradiction.

To complete the present theorem we need to establish a few other implication relations, which could be accomplished by closely following the arguments made to prove the corresponding implication relations in \cite[Theorem 2.1]{MKJ} and making a few obvious modifications.
\end{proof}

\section{A few cardinal functions}
\begin{definition}
	A subset $F$ of $C(X)$ is called dominating on a subset $A$ of $X$ if given $f\in C(X)$, there exists $g\in F$ such that $f(x)\leq g(x)$ for each $x\in A$. We set $$dn(A)=\aleph_0+\min\{|F|:F\subseteq C(X) \text{ and } F \text{ is dominating on the set }A\}$$ and call it the dominating number of $A$. It is easy to verify that for any subset $A$ of $X$, we have $dn(A)\leq dn(X)$. The following result is a straight forward generalization of Proposition 3.1 in \cite{MKJ}, whose proof also goes through in our case.
\end{definition}
\begin{theorem}
	$X$ is $I$-$pseudocompact$ if and only if $dn(X\setminus \bigcap Z[I])=\aleph_0$.
\end{theorem}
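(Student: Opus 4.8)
The plan is to prove both implications directly, exploiting the characterization that $X$ is $I$-pseudocompact exactly when every $f\in C(X)$ is bounded on $A\equiv X\setminus\bigcap Z[I]$. The forward direction is the easy half: assuming $X$ is $I$-pseudocompact, each $f\in C(X)$ is bounded above on $A$, so there is an $n\in\mathbb{N}$ with $f(x)\leq n$ for all $x\in A$. Hence the countable family of constant functions $\{n:n\in\mathbb{N}\}\subseteq C(X)$ is dominating on $A$ in the sense of the preceding definition, and this forces $dn(A)=\aleph_0$ at once.

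For the converse I would argue by contraposition, showing that if $X$ is \emph{not} $I$-pseudocompact then no countable subfamily of $C(X)$ can be dominating on $A$, so that $dn(A)>\aleph_0$. Since $A$ is then unbounded, Corollary 1.20 in \cite{GJ} supplies a copy of $\mathbb{N}$, say $D=\{x_1,x_2,\dots\}\subseteq A$, which is C-embedded in $X$. Given an arbitrary countable family $F=\{g_n:n\in\mathbb{N}\}\subseteq C(X)$, the C-embedding lets me choose $f\in C(X)$ with the prescribed values $f(x_n)=g_n(x_n)+1$ for every $n\in\mathbb{N}$. Then for each $n$ the point $x_n\in D\subseteq A$ witnesses $f(x_n)>g_n(x_n)$, so $g_n$ fails to dominate $f$ on $A$; as this holds for every $n$, no member of $F$ dominates $f$, and $F$ is not dominating on $A$. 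Since $F$ was an arbitrary countable family, $dn(A)>\aleph_0$, completing the contrapositive.

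The only genuinely nontrivial step is the passage, in the non-$I$-pseudocompact case, to a C-embedded copy of $\mathbb{N}$ inside $A$ via Corollary 1.20 in \cite{GJ}; this is precisely the device that converts the failure of boundedness into a platform for the diagonal construction of $f$, and it is the same mechanism already used repeatedly in the earlier theorems of this section. Once the C-embedded sequence is secured, the diagonalization is entirely routine, and combining the two directions yields the stated equivalence. This reproduces, in the present generalized setting, the proof of Proposition 3.1 in \cite{MKJ}, which is recovered as the special case $I=C(X)$.
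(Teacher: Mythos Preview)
Your proof is correct and follows essentially the same approach as the paper. The paper itself omits the argument, remarking only that the proof of Proposition~3.1 in \cite{MKJ} goes through verbatim; your write-up supplies exactly that adaptation, using the constant functions for the forward direction and the C-embedded copy of $\mathbb{N}$ from \cite[Corollary~1.20]{GJ} together with a diagonal construction for the converse, precisely as the paper does elsewhere (e.g.\ in the proof of $(12)\Rightarrow(1)$ of Theorem~3.3).
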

\begin{corollary}
	 $C_{m^I}(X)=C_{u^I}(X)$ if and only if $dn(X\setminus \bigcap Z[I])=\aleph_0$.
\end{corollary}

Our next theorem is a generalization of Theorem 3.1 in \cite{MKJ}. Our proof is modelled strongly  by the line of arguments given in the proof of Theorem 3.1 in \cite{MKJ}, nevertheless we provide a sketch of an independent proof of ours, because we need to incorporate certain additional points to suit our present problem.

\begin{theorem}
	$\chi(C_{m^I}(X))=dn(X\setminus \bigcap Z[I])$.
\end{theorem}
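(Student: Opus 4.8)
The plan is to exploit that $C_{m^I}(X)$ is a topological (additive) group, so translations are homeomorphisms and the character is attained uniformly: $\chi(C_{m^I}(X))=\chi(C_{m^I}(X),0)$. Thus it suffices to analyze local bases at the zero function. Writing $Y=X\setminus\bigcap Z[I]$, the basic neighborhoods of $0$ are the sets $B_m(0,I,u)=\{g\in I:|g(x)|<u(x)\text{ for all }x\in X\}$ with $u\in C_+(X)$. The first thing I would record is that every $g\in I$ vanishes on $\bigcap Z[I]$, so the defining inequality is automatic off $Y$; hence $B_m(0,I,u)$ depends only on $u|_Y$, and moreover $u\le v$ on $Y$ already forces $B_m(0,I,u)\subseteq B_m(0,I,v)$ (for $g\in I$ the inequality $|g(x)|<u(x)\le v(x)$ holds on $Y$, and trivially off $Y$). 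The strategy is then to match a minimal local base at $0$ with a coinitial family of $(C_+(X),\le_Y)$, where $u\le_Y v$ means $u\le v$ pointwise on $Y$, and finally to convert coinitiality into the dominating number via the order reversing map $u\mapsto 1/u$.

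For $\chi(C_{m^I}(X))\le dn(Y)$, I would start from a family $F\subseteq C(X)$ dominating on $Y$ with $|F|=dn(Y)$; replacing each member $f$ by $|f|+1$ we may assume every $f\in F$ satisfies $f\ge 1$ on $X$ while $F$ stays dominating on $Y$. Putting $u_f=1/f\in C_+(X)$, I claim $\{B_m(0,I,u_f):f\in F\}$ is a local base at $0$. Indeed, given any basic neighborhood $B_m(0,I,v)$, apply domination to $1/v\in C(X)$ to get $f\in F$ with $1/v\le f$ on $Y$, i.e. $u_f=1/f\le v$ on $Y$, and the easy containment recorded above yields $B_m(0,I,u_f)\subseteq B_m(0,I,v)$. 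This produces a local base of cardinality at most $dn(Y)$.

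The substantive half is $\chi(C_{m^I}(X))\ge dn(Y)$, and here the converse containment is the main obstacle, because the ideal $I$ may fail to contain enough functions to make $u\le v$ on $Y$ \emph{necessary} for $B_m(0,I,u)\subseteq B_m(0,I,v)$. I would settle this pointwise using Lemma \ref{Lemma}. Fix a local base at $0$; refining each of its members by a basic set contained in it, we may assume it has the form $\{B_m(0,I,u_\alpha):\alpha\in A\}$ without increasing its cardinality. Suppose $B_m(0,I,u_\alpha)\subseteq B_m(0,I,v)$ and, toward a contradiction, $u_\alpha(c)>2v(c)$ for some $c\in Y$. By Lemma \ref{Lemma} choose $h\in I$ with $|h|\le 1$ on $X$ and $|h(c)|=\tfrac12$, and for $t\in(0,1)$ close to $1$ set $g=t\,u_\alpha h$. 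Since $I$ is an ideal, $g\in I$; since $|g|\le t u_\alpha<u_\alpha$ on $Y$ we have $g\in B_m(0,I,u_\alpha)$, while $|g(c)|=\tfrac{t}{2}u_\alpha(c)>v(c)$ gives $g\notin B_m(0,I,v)$ — a contradiction. Hence any such containment forces $u_\alpha\le 2v$ on $Y$.

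It then remains to extract a dominating family. Given $f\in C(X)$, apply the local base property to $v=1/\bigl(2(|f|+1)\bigr)\in C_+(X)$ to find $\alpha$ with $B_m(0,I,u_\alpha)\subseteq B_m(0,I,v)$, whence $u_\alpha\le 2v=1/(|f|+1)$ on $Y$ and therefore $f\le|f|+1\le 1/u_\alpha$ on $Y$. Thus $\{1/u_\alpha:\alpha\in A\}\subseteq C(X)$ is dominating on $Y$, giving $dn(Y)\le\aleph_0+|A|$ and, on minimizing over local bases, $dn(Y)\le\chi(C_{m^I}(X))$. Combining the two estimates yields $\chi(C_{m^I}(X))=dn(X\setminus\bigcap Z[I])$; the harmless factor $2$ is absorbed because constant multiples do not affect the cardinal $dn(Y)$.
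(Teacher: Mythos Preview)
Your proof is correct and follows essentially the same approach as the paper's: both directions hinge on the order-reversing correspondence $u\leftrightarrow 1/u$ between a local base at $\underline{0}$ and a dominating family on $Y=X\setminus\bigcap Z[I]$, with Lemma~\ref{Lemma} supplying the test function $h\,u_\alpha$ (in the paper, $\tfrac12 h\,u_B$) needed to extract the pointwise inequality $u_\alpha\le 2v$ (in the paper, $u_B\le 4v$) from the containment $B_m(0,I,u_\alpha)\subseteq B_m(0,I,v)$. Your write-up is a bit more explicit---you spell out the easy direction $\chi\le dn$ in full (the paper defers it to \cite{MKJ}) and isolate the key pointwise estimate as a separate step---but the underlying argument is the same.
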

\begin{proof}
	By closely following the proof of the first part of Theorem 3.1 in \cite{MKJ}, it is not hard to show $\chi(C_{m^I}(X))\leq dn(X\setminus\bigcap Z[I])$. To prove the reverse inequality, let $\mathcal{B}$ be a local open base about the point $\underline{0}$ in the space $C_{m^I}(X)$ with $|\mathcal{B}|=\chi(C_{m^I}(X))$ and we can assume that each $B\in \mathcal{B}$ is of the form $B_m(\underline{0},I,u_B)$ with $u_B\in C_+(X)$. We assert that the set $\{\frac{4}{u_B}:B\in \mathcal{B}\}$ is a dominating set on $X\setminus \bigcap Z[I]$ and this proves the desired reverse inequality. Now towards the proof of this assertion let $f\in C(X)$ with $f(x)>0$ for each $x\in X$. Then $B_m(\underline{0},I,\frac{1}{f})$ is an open neighborhood of $0$ in $C_{m^I}(X)$, consequently there exists an $u_B\in C_+(X)$ such that $B_m(\underline{0},I,u_B)\subseteq B_m(\underline{0},I,\frac{1}{f})$. Now choose $c\in X\setminus \bigcap Z[I]$ arbitrarily. Then by Lemma \ref{Lemma}, there exists $h\in I$ such that $h(c)=\frac{1}{2}$ and $|h|\leq1$ on $X$. It follows that $\frac{1}{2}hu_B\in B_m(\underline{0},I,u_B)\subseteq B_m(\underline{0},I,\frac{1}{f})$. Consequently $\frac{1}{2}h(c)u_B(c)<\frac{1}{f(c)}$ and hence $f(c)<\frac{4}{u_B(c)}$. This inequality is true for any point $c\in X\setminus \cap Z[I]$ and we are done.
\end{proof}
\begin{corollary}
	$C_{u^I}(X)=C_{m^I}(X)$ if and only if $C_{m^I}(X)$ is first countable.
\end{corollary}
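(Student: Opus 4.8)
The plan is to read this equivalence off directly from the character computation $\chi(C_{m^I}(X)) = dn(X \setminus \bigcap Z[I])$ established in the immediately preceding theorem, combined with the corollary already recorded, namely that $C_{u^I}(X) = C_{m^I}(X)$ if and only if $dn(X \setminus \bigcap Z[I]) = \aleph_0$. No fresh construction is needed; the work has already been done in computing the character, and the statement becomes a formal consequence.

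First I would recall from Section 2 that a space $Y$ is first countable precisely when $\chi(Y) = \aleph_0$. Since $C_{m^I}(X)$ is a topological group and hence homogeneous, its character at each point coincides with its character at the identity element $\underline{0}$, so the global cardinal invariant $\chi(C_{m^I}(X))$ faithfully detects first countability of the space. This is the only place where one must be a little careful, passing from the supremum defining $\chi(C_{m^I}(X))$ to the existence of a countable base at a single (hence every) point.

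Next I would simply chain the equivalences. By the character formula, $C_{m^I}(X)$ is first countable if and only if $\chi(C_{m^I}(X)) = \aleph_0$, which holds if and only if $dn(X \setminus \bigcap Z[I]) = \aleph_0$. By the earlier corollary, this last condition is equivalent to $C_{u^I}(X) = C_{m^I}(X)$. Concatenating the two biconditionals yields the asserted equivalence.

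I expect no genuine obstacle here, since the substantive content was absorbed into the proof of the character theorem (and, in turn, into Lemma \ref{Lemma}). The corollary is essentially a restatement of that theorem through the definition of first countability, and the homogeneity remark is the sole detail worth spelling out.
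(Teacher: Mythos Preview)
Your proposal is correct and matches the paper's approach exactly: the corollary is placed immediately after the character formula $\chi(C_{m^I}(X))=dn(X\setminus\bigcap Z[I])$ and is meant to be read off from it together with the earlier corollary that $C_{u^I}(X)=C_{m^I}(X)$ if and only if $dn(X\setminus\bigcap Z[I])=\aleph_0$. Your homogeneity remark is harmless but in fact unnecessary, since by the definition $\chi(X,x)\geq\aleph_0$ for every $x$, so $\chi(C_{m^I}(X))=\aleph_0$ already forces a countable local base at each point without invoking the group structure.
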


The next three cardinal inequalities are a little bit generalized version of their counterparts with $I=C(X)$ as established in Proposition 3.3 and Proposition 3.4 in \cite{MKJ} and whose proofs also go through in our case.
\begin{theorem}
	$d(C_{m^I}(X))\leq \chi(C_{m^I}(X))\cdot c(C_{m^I}(X)).$
\end{theorem}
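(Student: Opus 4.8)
The plan is to exploit the fact that $C_{m^I}(X)$ is a topological ring, and in particular a topological group under addition, so that it is a homogeneous space and $\chi(C_{m^I}(X),f)=\chi(C_{m^I}(X))$ for every $f\in C(X)$. Writing $\kappa=\chi(C_{m^I}(X))$ and $\lambda=c(C_{m^I}(X))$, I would first invoke the description of the character obtained in the preceding theorem to fix a local open base $\{B_m(\underline{0},I,u_\alpha):\alpha<\kappa\}$ at the additive identity $\underline{0}$, where each $u_\alpha\in C_+(X)$. Halving, set $W_\alpha=B_m(\underline{0},I,\tfrac{1}{2}u_\alpha)$; the two routine facts I would record are that each $W_\alpha$ is symmetric (closed under $g\mapsto -g$, since $|{-g}|=|g|$ and $-g\in I$) and that $W_\alpha+W_\alpha\subseteq B_m(\underline{0},I,u_\alpha)$, both immediate from the triangle inequality together with $I$ being an additive subgroup.

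Next, for each $\alpha<\kappa$ I would use Zorn's lemma to choose a set $D_\alpha\subseteq C(X)$ maximal with respect to the property that the translated family $\{f+W_\alpha:f\in D_\alpha\}$ consists of pairwise disjoint nonempty open sets. By the very definition of cellularity this forces $|D_\alpha|\le c(C_{m^I}(X))=\lambda$. Putting $D=\bigcup_{\alpha<\kappa}D_\alpha$ we immediately get $|D|\le\kappa\cdot\lambda=\chi(C_{m^I}(X))\cdot c(C_{m^I}(X))$, so it only remains to prove that $D$ is dense in $C_{m^I}(X)$.

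For density I would take an arbitrary basic open set $B_m(g,I,u)$ and first pick $\alpha$ with $B_m(\underline{0},I,u_\alpha)\subseteq B_m(\underline{0},I,u)$, so that after translation $g+B_m(\underline{0},I,u_\alpha)\subseteq B_m(g,I,u)$. The maximality of $D_\alpha$ prevents $g+W_\alpha$ from being disjoint from every $f+W_\alpha$ with $f\in D_\alpha$ (otherwise $D_\alpha\cup\{g\}$ would contradict maximality); hence there is some $f\in D_\alpha$ and $w_1,w_2\in W_\alpha$ with $g+w_1=f+w_2$, i.e. $f=g+(w_1-w_2)$. Since $W_\alpha$ is symmetric, $w_1-w_2\in W_\alpha+W_\alpha\subseteq B_m(\underline{0},I,u_\alpha)$, whence $f\in g+B_m(\underline{0},I,u_\alpha)\subseteq B_m(g,I,u)$, so that $f\in D\cap B_m(g,I,u)$. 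This exhibits a point of $D$ in every basic open set and settles density.

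The computation is essentially the classical argument that $d(G)\le\chi(G)\cdot c(G)$ for a topological group $G$, specialized to the additive group of $C_{m^I}(X)$, and I expect no genuine obstacle beyond bookkeeping. The one point that needs care, and where the $m^I$-structure actually enters, is the translation step: one must check that the basic neighborhoods satisfy $B_m(f,I,u)=f+B_m(\underline{0},I,u)$ and that the membership $w_1-w_2\in B_m(\underline{0},I,u_\alpha)$ keeps both the sup-bound and the $I$-component intact, which is exactly where closure of $I$ under subtraction is used. Once these additive facts are in place, the density of $D$ follows verbatim from the maximality argument, yielding the desired inequality.
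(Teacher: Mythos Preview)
Your argument is correct and is exactly the standard topological-group proof of the inequality $d(G)\le\chi(G)\cdot c(G)$, specialized to the additive group of $C_{m^I}(X)$; the verifications that $B_m(f,I,u)=f+B_m(\underline{0},I,u)$, that $W_\alpha$ is symmetric, and that $W_\alpha+W_\alpha\subseteq B_m(\underline{0},I,u_\alpha)$ all go through for the reasons you give. The paper itself does not supply a proof of this theorem: it simply remarks that the argument for Proposition~3.3 in \cite{MKJ} (the case $I=C(X)$) carries over verbatim, and what you have written is precisely that argument with the $I$-bookkeeping made explicit. One cosmetic point: you do not actually need to ``invoke the description of the character obtained in the preceding theorem'' (which identifies $\chi(C_{m^I}(X))$ with a dominating number); all you use is the existence of a local base of basic sets $B_m(\underline{0},I,u_\alpha)$ of cardinality $\kappa$, and that follows directly from the definition of $\chi$ together with the fact that $\{B_m(\underline{0},I,u):u\in C_+(X)\}$ is a neighborhood base at $\underline{0}$.
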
 
\begin{theorem}
	$d(C_{m^I}(X))\leq \chi(C_{m^I}(X))\cdot L(C_{m^I}(X))$.
\end{theorem}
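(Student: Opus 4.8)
The plan is to exploit the fact that $C_{m^I}(X)$ is a topological group (indeed a topological ring), so that it is homogeneous and its character is witnessed at the single point $\underline{0}$. Write $\kappa=\chi(C_{m^I}(X))$ and $\lambda=L(C_{m^I}(X))$. By homogeneity I fix a local open base $\mathcal{B}=\{B_m(\underline{0},I,u_B):B\in\mathcal{B}\}$ at $\underline{0}$ consisting of basic open sets, with $|\mathcal{B}|=\kappa$; each such set $B_m(\underline{0},I,u_B)=\{g\in I:|g(x)|<u_B(x)\text{ for all }x\in X\}$ is symmetric, i.e.\ $B=-B$, a feature I will use crucially in place of the usual passage to symmetric neighbourhoods.

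For each $B\in\mathcal{B}$ the translates $\{f+B:f\in C(X)\}=\{B_m(f,I,u_B):f\in C(X)\}$ form an open cover of $C_{m^I}(X)$, since every $f$ lies in $B_m(f,I,u_B)$. Because $L(C_{m^I}(X))=\lambda$, I extract from each such cover a subcover $\{B_m(f_{B,\beta},I,u_B):\beta<\lambda\}$ of cardinality at most $\lambda$, and set $D=\{f_{B,\beta}:B\in\mathcal{B},\ \beta<\lambda\}$. Then $|D|\le\kappa\cdot\lambda=\chi(C_{m^I}(X))\cdot L(C_{m^I}(X))$, so it remains only to check that $D$ is dense.

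To verify density, I take any $g\in C(X)$ and any basic neighbourhood $W=B_m(g,I,v)$ of $g$. Since $W-g=B_m(\underline{0},I,v)$ is a neighbourhood of $\underline{0}$ and $\mathcal{B}$ is a local base there, I choose $B\in\mathcal{B}$ with $B\subseteq W-g$, so that $g+B\subseteq W$. As the translates of $B$ cover the space, there is $\beta<\lambda$ with $g\in B_m(f_{B,\beta},I,u_B)$, that is $g-f_{B,\beta}\in B$; the symmetry $B=-B$ then gives $f_{B,\beta}-g\in B$, whence $f_{B,\beta}\in g+B\subseteq W$. Thus $D\cap W\neq\emptyset$, and $D$ is dense.

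This is essentially the classical argument that $d(G)\le\chi(G)\cdot L(G)$ for a topological group $G$, and I expect no serious obstacle. The only points that require care are the symmetry of the basic sets $B_m(\underline{0},I,u_B)$, the transfer of the single local base at $\underline{0}$ to arbitrary points via the group structure, and the routine infinite-cardinal arithmetic $\kappa\cdot\lambda$; these are precisely the ``additional points'' that allow the proof of Proposition 3.4 in \cite{MKJ} to carry over with only obvious modifications to the present $m^I$ setting.
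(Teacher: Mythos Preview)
Your proof is correct and is precisely the standard topological-group argument underlying Proposition~3.4 of \cite{MKJ}, which is exactly what the paper invokes: the authors do not write out a proof but simply state that the \cite{MKJ} argument ``goes through in our case.'' You have supplied that argument explicitly, including the one point genuinely specific to the $m^I$ setting, namely the symmetry $B_m(\underline{0},I,u)=-B_m(\underline{0},I,u)$ (which holds because $I$ is an ideal and the defining inequality involves $|g|$).
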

\begin{theorem}
	$dn(X\setminus \bigcap Z[I])\leq c(C_{m^I}(X))$.
\end{theorem}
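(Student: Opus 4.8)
The plan is to prove the inequality by contradiction, manufacturing a cellular family strictly larger than the cellularity of $C_{m^I}(X)$. Put $A = X \setminus \bigcap Z[I]$ and $\kappa = c(C_{m^I}(X))$. Since $\kappa \geq \aleph_0$ by definition, the inequality is automatic when $dn(A) = \aleph_0$, so I would assume $dn(A) > \aleph_0$ and suppose, towards a contradiction, that $dn(A) > \kappa$. Then I would build, by transfinite recursion on $\alpha < \kappa^{+}$, functions $f_\alpha \in C(X)$ so that the basic open sets $B_m(f_\alpha, I, \mathbf{1})$ are pairwise disjoint, where $\mathbf{1}$ denotes the constant function with value $1$ in $C_+(X)$. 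Producing $\kappa^{+}$ pairwise disjoint nonempty open sets would contradict $c(C_{m^I}(X)) = \kappa$, since each $B_m(f_\alpha, I, \mathbf{1})$ is nonempty (it contains its own centre $f_\alpha$).

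The engine of the recursion is the definition of $dn(A)$: because $dn(A) > \aleph_0$, the number $dn(A)$ is the least cardinality of a family in $C(X)$ that is dominating on $A$, so every family $F \subseteq C(X)$ with $|F| \leq \kappa < dn(A)$ fails to be dominating on $A$. Concretely, there is then some $f \in C(X)$ such that for each $g \in F$ there is a point of $A$ at which $f$ strictly exceeds $g$. At stage $\alpha < \kappa^{+}$, with $\{f_\beta : \beta < \alpha\}$ already chosen (note $|\alpha| \leq \kappa < dn(A)$), I would apply this to the family $F_\alpha = \{f_\beta + 2 : \beta < \alpha\}$, obtaining $f_\alpha \in C(X)$ such that for every $\beta < \alpha$ there is a point $x_{\alpha,\beta} \in A$ with $f_\alpha(x_{\alpha,\beta}) > f_\beta(x_{\alpha,\beta}) + 2$. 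The recursion therefore never stalls, since at every stage the family $F_\alpha$ has cardinality at most $\kappa < dn(A)$.

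The required disjointness is then read off from two observations. First, two members of sets of the form $B_m(\cdot, I, \cdot)$ can coincide only when the difference of their centres lies in $I$; hence if $f_\alpha - f_\beta \notin I$ the sets $B_m(f_\alpha, I, \mathbf{1})$ and $B_m(f_\beta, I, \mathbf{1})$ are automatically disjoint. Second, in the remaining case any common member $h$ would force $|f_\alpha(x) - f_\beta(x)| < \mathbf{1}(x) + \mathbf{1}(x) = 2$ at every $x \in X$, whereas at the point $x_{\alpha,\beta}$ the construction has produced a gap strictly exceeding $2$; so the two sets are disjoint here as well. Thus in either case $B_m(f_\alpha, I, \mathbf{1}) \cap B_m(f_\beta, I, \mathbf{1}) = \emptyset$ for all $\beta < \alpha$, and the resulting family $\{B_m(f_\alpha, I, \mathbf{1}) : \alpha < \kappa^{+}\}$ is pairwise disjoint, giving the contradiction and hence $dn(A) \leq c(C_{m^I}(X))$.

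I expect the main subtlety to be the quantitative matching between the separation used for disjointness and the radii of the balls. Non-domination by itself only yields a pointwise strict inequality $f_\alpha > f_\beta$ at a single point, which is far too weak to separate balls of previously prescribed, possibly large, radii. The device that overcomes this is to build the radius into the dominated family by comparing $f_\alpha$ against $f_\beta + 2$ rather than against $f_\beta$, while keeping every radius equal to the constant function $1$; this turns a bare strict inequality into a genuine gap exceeding the sum of the two radii, and simultaneously removes the awkward interaction that would arise between the already fixed earlier radii and the radius still to be chosen at the current stage.
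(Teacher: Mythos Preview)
Your argument is correct. The paper does not give its own proof of this inequality; it simply records that the argument for Proposition~3.4 in \cite{MKJ} (the case $I=C(X)$) goes through unchanged. That argument is direct rather than by contradiction: one fixes a maximal pairwise disjoint family $\mathcal{V}$ of balls $B_m(f,I,\mathbf{1})$, so $|\mathcal{V}|\le c(C_{m^I}(X))$; by maximality every $B_m(g,I,\mathbf{1})$ meets some member $B_m(f,I,\mathbf{1})\in\mathcal{V}$, and any common point witnesses $|g-f|<2$ on all of $X$, whence $g\le f+2$ on $A$, so $\{f+2:B_m(f,I,\mathbf{1})\in\mathcal{V}\}$ is dominating on $A$ with cardinality at most $c(C_{m^I}(X))$. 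Your transfinite recursion reaches the same destination from the other side, manufacturing a cellular family of size $\kappa^{+}$ from the hypothetical failure of domination. The direct version is a little shorter and needs no case split on whether $f_\alpha-f_\beta\in I$ (indeed that split is already redundant in your argument, since the pointwise estimate $|f_\alpha-f_\beta|<2$ obtained in your second case suffices by itself); your version, on the other hand, makes the calibration between the ball radius $\mathbf{1}$ and the shift by $2$ entirely explicit, which is precisely the subtlety you single out.
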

The cardinal inequality that we are going to present now is also a generalization of its counterpart viz the second portion of Proposition 3.4 in \cite{MKJ}. But there is a small error in its proof while making a proof of this portion of the proposition, the authors have written that given a dominating subset $F$ of $C(X)$, there exists a subset $G$ of $F$ for which $\{B_m(g,1):g\in G\}$ is an open cover of $C_f(X)$. But this is not correct, because we can choose a dominating subset $F$ of $C(X)$ such that for each $f\in F$, $f\geq 3$. In that case $1\notin B_m(f,1)$ for each $f\in F$. Nevertheless we give a correct proof of the same inequality in its generalized version.
\begin{theorem}
	$dn(X\setminus \bigcap Z[I])\leq L(C_{m^I}(X))$.
\end{theorem}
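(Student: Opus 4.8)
The plan is to prove the inequality in the only workable direction: start from a witness for the Lindel\"of number $L(C_{m^I}(X))$---a small subcover of a cleverly chosen open cover---and extract from it a dominating family on $A:=X\setminus\bigcap Z[I]$. This is precisely where the flawed argument in \cite{MKJ} went astray, since it attempted to manufacture an open cover \emph{from} a given dominating family; I would instead run the implication the other way, letting the Lindel\"of number hand me a cover to harvest.

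First I would fix the open cover $\mathcal{U}=\{B_m(f,I,1):f\in C(X)\}$ of $C_{m^I}(X)$, where $1$ denotes the constant function with value $1$ (which indeed lies in $C_+(X)$). This is a genuine cover, because $f-f=0\in I$ forces $f\in B_m(f,I,1)$ for every $f\in C(X)$. By the definition of the Lindel\"of number there is then a subcover $\{B_m(f_\alpha,I,1):\alpha\in\Lambda\}$ with $|\Lambda|\le L(C_{m^I}(X))$.

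Next I would claim that the family $F=\{f_\alpha+1:\alpha\in\Lambda\}$ is dominating on $A$ (in fact on all of $X$). Given any $h\in C(X)$, the subcover property supplies an $\alpha\in\Lambda$ with $h\in B_m(f_\alpha,I,1)$; by the very definition of this basic ball, $|h(x)-f_\alpha(x)|<1$ for every $x\in X$, whence $h(x)<f_\alpha(x)+1$ for every $x\in X$, and in particular for every $x\in A$. Thus $f_\alpha+1\in F$ dominates $h$ on $A$, and therefore $dn(A)\le|F|\le|\Lambda|\le L(C_{m^I}(X))$, which is exactly the desired inequality.

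The step I expect to require the most care is the translation from ``membership in the $m$-ball $B_m(f_\alpha,I,1)$'' to ``pointwise domination'': the two-sided estimate $|h-f_\alpha|<1$ becomes useful only after shifting the centre by the constant $1$, converting it into the one-sided bound $h<f_\alpha+1$. The real conceptual point---and the correction relative to \cite{MKJ}---is simply to arrange the proof so that $L(C_{m^I}(X))$ delivers a subcover to be mined, rather than trying to build a cover out of a dominating family; once this direction is adopted, no analogue of the constant-function pathology noted before the statement can occur, since every $h$ automatically lies inside one of the chosen balls.
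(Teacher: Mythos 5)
Your proposal is correct and follows essentially the same route as the paper: both take the cover $\{B_m(f,I,1):f\in C(X)\}$, extract a subcover of size at most $L(C_{m^I}(X))$, and use the two-sided bound $|h-f_\alpha|<1$ to get domination. The only cosmetic difference is that the paper applies the subcover to $h+1$ so that the centres themselves dominate, whereas you apply it to $h$ and shift the dominating family to $\{f_\alpha+1\}$; these are interchangeable.
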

\begin{proof}
	 Given $\{B_m(g,I,1):g\in C(X)\}$ is an open covering of $C_{m^I}(X)$, there is a subfamily $G$ of $C(X)$ such that $|G|\leq L(C_{m^I}(X))$ and $\{B_m(g,I,1):g\in G\}$ is a cover of $C(X)$. It is not hard to check that $G$ is a dominating set on $X\setminus \bigcap Z[I] $ because for any $h\in C(X)$, $h+1\in B_m(h_0,I,1)$ for some $h_0\in G$. Therefore $h(x)+1-h_0(x)<1$ for all $x\in X\setminus \bigcap Z[I]$ and hence $h(x)<h_0(x)$ for each $x\in X\setminus \bigcap Z[I]$.
\end{proof}
 A straightway generalization of Proposition 3.2 in \cite{MKJ} yields the following cardinal equality.
 \begin{theorem}
 	$w(C_{m^I}(X))=\chi(C_{m^I}(X))\cdot d(C_{m^I}(X))$.
 \end{theorem}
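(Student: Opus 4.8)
The plan is to prove the two inequalities $w(C_{m^I}(X)) \geq \chi(C_{m^I}(X)) \cdot d(C_{m^I}(X))$ and $w(C_{m^I}(X)) \leq \chi(C_{m^I}(X)) \cdot d(C_{m^I}(X))$ separately. First I would note that all four cardinal functions involved are infinite (each carries the summand $\aleph_0$), so that $\chi(C_{m^I}(X)) \cdot d(C_{m^I}(X)) = \max\{\chi(C_{m^I}(X)), d(C_{m^I}(X))\}$. The inequality ``$\geq$'' is then a purely general fact: from any base of minimal cardinality one extracts a local base at a fixed point by taking its members that contain that point, giving $\chi \leq w$; and picking one point from each nonempty basic open set produces a dense set of cardinality at most $w$, giving $d \leq w$. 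Hence $w \geq \max\{\chi, d\} = \chi \cdot d$, with no use of the structure of $C_{m^I}(X)$.

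The content lies in ``$\leq$'', where I would exploit that $C_{m^I}(X)$ is a topological group. Fix a dense set $D$ with $|D| = d(C_{m^I}(X))$ and a family $\mathcal{U} \subseteq C_+(X)$ with $|\mathcal{U}| = \chi(C_{m^I}(X))$ such that $\{B_m(\underline{0}, I, u) : u \in \mathcal{U}\}$ is a local base at the zero function $\underline{0}$; this is possible because, being a topological group, $C_{m^I}(X)$ is homogeneous, so the character is attained at $\underline{0}$, and every neighborhood of $\underline{0}$ contains a basic one of the form $B_m(\underline{0}, I, u)$. Then I set
$$\mathcal{B} = \{ B_m(f, I, u) : f \in D,\ u \in \mathcal{U} \} = \{ f + B_m(\underline{0}, I, u) : f \in D,\ u \in \mathcal{U} \},$$
which has cardinality at most $d(C_{m^I}(X)) \cdot \chi(C_{m^I}(X))$. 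I would then show $\mathcal{B}$ is a base for $C_{m^I}(X)$, which immediately yields $w \leq |\mathcal{B}| \leq \chi \cdot d$ and closes the argument.

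To verify that $\mathcal{B}$ is a base, take an open set $W$ and a point $g \in W$. Translating by $-g$ (a homeomorphism), there is $u \in \mathcal{U}$ with $g + B_m(\underline{0}, I, u) = B_m(g, I, u) \subseteq W$, and then a $u' \in \mathcal{U}$ with $B_m(\underline{0}, I, u') \subseteq B_m(\underline{0}, I, \tfrac{u}{2})$. The two facts I would lean on are the symmetry $-B_m(\underline{0}, I, u') = B_m(\underline{0}, I, u')$ and the triangle-inequality inclusion $B_m(\underline{0}, I, \tfrac{u}{2}) + B_m(\underline{0}, I, \tfrac{u}{2}) \subseteq B_m(\underline{0}, I, u)$, both immediate from the definition of $B_m$. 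Using density of $D$, choose $f \in D \cap B_m(g, I, u')$; symmetry gives $g \in B_m(f, I, u')$, while $f \in g + B_m(\underline{0}, I, u')$ combined with the halving inclusion gives $B_m(f, I, u') \subseteq g + B_m(\underline{0}, I, u) = B_m(g, I, u) \subseteq W$. Thus $g \in B_m(f, I, u') \subseteq W$ with $B_m(f, I, u') \in \mathcal{B}$, as required. The main obstacle is the bookkeeping of this ``halving'': one must pass from $u$ to a member $u'$ of $\mathcal{U}$ sitting inside $B_m(\underline{0}, I, \tfrac{u}{2})$ so that two copies of the smaller neighborhood fit inside the larger one. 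This is precisely where the group structure, namely translation invariance together with symmetric, ``divisible'' basic neighborhoods, is indispensable, and it is the only point at which the special form of $C_{m^I}(X)$ enters the proof.
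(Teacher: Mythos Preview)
Your proof is correct and is essentially the standard topological-group argument that $w(G)=\chi(G)\cdot d(G)$, carried out explicitly for the basic neighborhoods $B_m(\cdot,I,u)$; the paper does not supply its own proof here but simply remarks that the equality is a straightforward generalization of Proposition~3.2 in \cite{MKJ}, whose proof proceeds along exactly the same lines. In short, your approach matches the intended one.
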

Combining all the above cardinal inequalities, the following result is obtained.
\begin{theorem} \label{equi}
	For any space $X$ and for an ideal $I$ in $C(X)$,
	$$w(C_{m^I}(X))=d(C_{m^I}(X))=c(C_{m^I}(X))=L(C_{m^I}(X)).$$
\end{theorem}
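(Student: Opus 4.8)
The plan is to establish the chain of equalities by chaining together the cardinal inequalities already proved in this section, together with the standard cardinal relation $c(Y)\leq d(Y)\leq w(Y)$ and $L(Y)\leq w(Y)$ that hold for any topological space $Y$. The strategy is to show that each of the four cardinal functions $w$, $d$, $c$, $L$ (evaluated at $C_{m^I}(X)$) is squeezed between the others, forcing all of them to coincide. The key additional fact I would bring in is Theorem \ref{equi}'s immediate predecessor, namely $w(C_{m^I}(X))=\chi(C_{m^I}(X))\cdot d(C_{m^I}(X))$, which converts weight into a product involving character, and the theorem identifying $\chi(C_{m^I}(X))=dn(X\setminus\bigcap Z[I])$.

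First I would record the trivially valid general inequalities $c(C_{m^I}(X))\leq d(C_{m^I}(X))$ and $L(C_{m^I}(X))\leq w(C_{m^I}(X))$, and likewise $d(C_{m^I}(X))\leq w(C_{m^I}(X))$, which hold for every space. Next I would invoke the two density-bound theorems of this section, $d(C_{m^I}(X))\leq\chi(C_{m^I}(X))\cdot c(C_{m^I}(X))$ and $d(C_{m^I}(X))\leq\chi(C_{m^I}(X))\cdot L(C_{m^I}(X))$. The crucial point is to eliminate the factor $\chi(C_{m^I}(X))$. Using $\chi(C_{m^I}(X))=dn(X\setminus\bigcap Z[I])$ together with the theorems $dn(X\setminus\bigcap Z[I])\leq c(C_{m^I}(X))$ and $dn(X\setminus\bigcap Z[I])\leq L(C_{m^I}(X))$, I can bound the character itself by $c$ and by $L$. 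Substituting these back, the products collapse: for instance $d\leq\chi\cdot c\leq c\cdot c=c$ (since these are infinite cardinals, $\kappa\cdot\kappa=\kappa$), giving $d(C_{m^I}(X))\leq c(C_{m^I}(X))$, which combined with the trivial reverse inequality yields $c(C_{m^I}(X))=d(C_{m^I}(X))$. The identical argument with $L$ in place of $c$ gives $d(C_{m^I}(X))=L(C_{m^I}(X))$.

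Finally I would handle the weight. From $w(C_{m^I}(X))=\chi(C_{m^I}(X))\cdot d(C_{m^I}(X))$ and the bound $\chi(C_{m^I}(X))=dn(X\setminus\bigcap Z[I])\leq L(C_{m^I}(X))=d(C_{m^I}(X))$ just established, I obtain $w(C_{m^I}(X))\leq d(C_{m^I}(X))\cdot d(C_{m^I}(X))=d(C_{m^I}(X))$, and the reverse inequality $d\leq w$ is automatic. Thus all four coincide. The main subtlety — the step I expect to require care rather than the step I expect to be genuinely hard — is the bookkeeping of idempotence of infinite cardinal multiplication (the absorbing identities $\kappa\cdot\kappa=\kappa$ and $\aleph_0\cdot\kappa=\kappa$), which is what makes the products $\chi\cdot c$ and $\chi\cdot L$ collapse once $\chi$ has been bounded by the relevant companion cardinal; all the genuine topological content has already been packaged into the cardinal inequalities of this section, so no new geometric obstacle arises here.
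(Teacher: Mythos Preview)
Your approach is the same as the paper's, which simply says the result follows by ``combining all the above cardinal inequalities''; you have correctly identified which inequalities to chain and how the character factor is absorbed.

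There is one small logical slip in your ordering. After getting $d\leq c$ via $d\leq\chi\cdot c\leq c\cdot c=c$ and closing the loop with the standard $c\leq d$, you say ``the identical argument with $L$ in place of $c$'' gives $d=L$. The forward half $d\leq\chi\cdot L\leq L$ is fine, but the ``trivial reverse inequality'' you invoke is $L\leq d$, which is \emph{not} valid in general topological spaces. The easy fix is to reorder: first establish $c=d$ as you do; then, since $\chi=dn(X\setminus\bigcap Z[I])\leq c=d$, deduce $w=\chi\cdot d\leq d\cdot d=d\leq w$, hence $w=d$; finally use the genuinely standard inequality $L\leq w=d$ together with your $d\leq L$ to close $L=d$. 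With this reordering your proof is complete and matches the paper.
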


\section{The question of boundedness and compactness for subsets of $C_{m^I}(X)$}
Since the topology of $C_{m^I}(X)$ is finer than that of the space $C_{u^I}(X)$, it follows from Theorem \ref*{m} that $C_{m^I}(X)$ is a submetrizable space. As a submetrizable pseudocompact space is metrizable \cite[Lemma 4.3]{KM} see also \cite{SH}, the following result is obtained almost immediately just by mimicking the proof of Theorem 5.1 in \cite{MKJ}.
\begin{theorem}\label{Compact}
	The following statements are equivalent for a subset $K$ of $C(X)$:
	\begin{enumerate}
		\item $K$ is compact in $C_{m^I}(X)$.
		\item $K$ is sequentially compact in $C_{m^I}(X)$.
		\item $K$ is countably compact in $C_{m^I}(X)$.
		\item $K$ is pseudocompact in $C_{m^I}(X)$.
	\end{enumerate}
\end{theorem}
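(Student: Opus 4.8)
The plan is to exploit the submetrizability of $C_{m^I}(X)$ recorded just before the statement, together with the classical fact that a submetrizable pseudocompact space is metrizable \cite{KM}. I would arrange the four conditions into the cycle $(1)\Rightarrow(2)\Rightarrow(3)\Rightarrow(4)\Rightarrow(1)$, in which the first three arrows are soft and the last one carries all the weight.

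First I would dispose of the easy arrows. The implications $(2)\Rightarrow(3)$ (sequential compactness yields countable compactness) and $(3)\Rightarrow(4)$ (countable compactness yields pseudocompactness, which is legitimate since $C_{m^I}(X)$, being a $T_0$ topological ring, is Tychonoff, and hence so is every subspace $K$) hold in complete generality and require nothing about the $m^I$-structure. For $(1)\Rightarrow(2)$ I would first observe that a compact subset $K$ is automatically metrizable: the complete metric $d$ of Theorem \ref{m} restricts to a coarser Hausdorff topology on $K$, and the identity map from $K$ with its (compact) $m^I$-subspace topology onto $K$ with this coarser metric topology is a continuous bijection from a compact space to a Hausdorff space, hence a homeomorphism. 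Thus $K$ carries the subspace metric topology, and compactness of a metric space then delivers sequential compactness.

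The substance lies in $(4)\Rightarrow(1)$. Assuming $K$ is pseudocompact, I would note that $K$, as a subspace of the submetrizable space $C_{m^I}(X)$, is itself submetrizable, the restriction of $d$ furnishing a coarser metrizable topology. Invoking the cited result that a submetrizable pseudocompact space is metrizable, I conclude that $K$ is metrizable. At this stage $K$ is a pseudocompact metric space, and for metric spaces pseudocompactness, countable compactness, sequential compactness and compactness all coincide; in particular $K$ is compact, closing the cycle.

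The main obstacle, and really the only nontrivial point, is the passage from pseudocompactness to metrizability of $K$, which rests entirely on the lemma that submetrizable pseudocompact spaces are metrizable; everything else is formal. I would present the argument by closely mimicking the proof of Theorem 5.1 in \cite{MKJ}, the sole adaptation being to replace the uniform metric used there by the complete metric $d$ of Theorem \ref{m} and to restrict the relevant suprema to $X\setminus\bigcap Z[I]$ wherever a supremum over $X$ appears.
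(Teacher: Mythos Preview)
Your proposal is correct and follows exactly the route the paper indicates: the paper does not give an independent argument but simply records that $C_{m^I}(X)$ is submetrizable (via Theorem~\ref{m}), invokes \cite[Lemma 4.3]{KM} that a submetrizable pseudocompact space is metrizable, and says the result then follows by mimicking \cite[Theorem 5.1]{MKJ}. Your cycle $(1)\Rightarrow(2)\Rightarrow(3)\Rightarrow(4)\Rightarrow(1)$, with the only nontrivial step $(4)\Rightarrow(1)$ handled by submetrizability plus the cited lemma, is precisely this argument spelled out.
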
 

It is proved in \cite{A2017}  that every compact subset of $C_{m^I}(X)$ has an empty interior if and only if $X\setminus \bigcap Z[I]$ is infinite. This fact combined with Theorem \ref{Compact} yields the following result immediately.
\begin{theorem}\label{c}
	The following statements are equivalent for an ideal $I$ in $C(X)$:
	\begin{enumerate}
		\item Each compact subset of $C_{m^I}(X)$ has empty interior.
		\item Each sequentially compact subset of $C_{m^I}(X)$ has empty interior.
		\item Each countably compact subset of $C_{m^I}(X)$ has empty interior.
		\item Each pseudocompact subset of $C_{m^I}(X)$ has empty interior.
		\item $X\setminus \bigcap Z[I]$ is infinite.
	\end{enumerate}
\end{theorem}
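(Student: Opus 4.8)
The plan is to observe that statements (1)--(4) do not merely imply one another but describe the very same collection of subsets, so that their equivalence is immediate from Theorem \ref{Compact}, and then to close the chain with the characterization quoted from \cite{A2017}. The key point is that Theorem \ref{Compact} asserts, for every \emph{individual} subset $K$ of $C(X)$, the equivalence of being compact, sequentially compact, countably compact, and pseudocompact in $C_{m^I}(X)$. Hence the four families of subsets named in (1)--(4)---the compact ones, the sequentially compact ones, the countably compact ones, and the pseudocompact ones---coincide as sets. Consequently the four assertions ``every member of the family has empty interior'' are one and the same statement, phrased four ways, which yields $(1)\Leftrightarrow(2)\Leftrightarrow(3)\Leftrightarrow(4)$ with no further argument.

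It then remains only to connect this common statement with (5). For this I would invoke directly the result established in \cite{A2017}, to the effect that every compact subset of $C_{m^I}(X)$ has empty interior if and only if $X\setminus\bigcap Z[I]$ is infinite; this is exactly $(1)\Leftrightarrow(5)$. Combining the two observations closes the cycle and completes the proof.

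There is essentially no obstacle to overcome, as both ingredients are already in hand: the substantive content lies in Theorem \ref{Compact} (which itself rests on the submetrizability of $C_{m^I}(X)$ together with the fact that a submetrizable pseudocompact space is metrizable) and in the cited characterization from \cite{A2017}. The only point deserving explicit mention is that Theorem \ref{Compact} forces the four families of subsets to be \emph{identical}, so that (1)--(4) are not four separate equivalences requiring individual proof but a single statement; once this is recorded, the theorem follows immediately.
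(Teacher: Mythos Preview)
Your proposal is correct and follows exactly the paper's own argument: the paper states that the cited result from \cite{A2017} (giving $(1)\Leftrightarrow(5)$) combined with Theorem \ref{Compact} (which makes the four families in (1)--(4) coincide) yields the theorem immediately. There is nothing to add.
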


We are going to show that, within the class of compact metrizable spaces $X$, $C(X)$ is characterized among all the ideals $I$ in $C(X)$ by the requirement that $C_{m^I}(X)$ is $\aleph_0$-bounded. We need a subsidiary result to prove this fact.
\begin{lemma}\label{bdd}
	Let $I$ be a fixed ideal in $C(X)$. Then the space $C_{u^I}(X)$ is never $\aleph_0$-bounded.
\end{lemma}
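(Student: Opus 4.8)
The plan is to exploit the single feature that distinguishes a fixed ideal, namely that $\bigcap Z[I]$ is nonempty, and convert it into a cardinality obstruction. Since $I$ is fixed, I would fix a point $p \in \bigcap Z[I]$; then every $g \in I$ satisfies $g(p) = 0$. Recall that $C_{u^I}(X)$ is a topological group under pointwise addition with identity the zero function $\underline{0}$, so $\aleph_0$-boundedness, written additively, demands that for every neighborhood $V$ of $\underline{0}$ there be a countable $A \subseteq C(X)$ with $C(X) = A + V$.

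First I would observe that each basic neighborhood $V = B_u(\underline{0}, I, \epsilon)$ of $\underline{0}$ is contained in $I$, because every element of $B_u(\underline{0}, I, \epsilon)$ differs from $\underline{0}$ by a member of $I$. Consequently, if $f = a + g$ with $a \in A$ and $g \in V$, then $f - a = g \in I$, and evaluating at $p$ forces $f(p) = a(p)$. Thus the coset condition $f - a \in I$, rather than the uniform estimate, is the binding constraint, and it pins the value $f(p)$ to a value attained on the countable set $A$.

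Next I would argue by contradiction. Suppose $C_{u^I}(X)$ were $\aleph_0$-bounded and apply the definition to the single neighborhood $V = B_u(\underline{0}, I, 1)$, obtaining a countable $A$ with $C(X) = A + V$. By the previous step, every $f \in C(X)$ satisfies $f(p) = a(p)$ for some $a \in A$, so the set of real values $\{f(p) : f \in C(X)\}$ is contained in the countable set $\{a(p) : a \in A\}$. But since $X$ is Tychonoff the constant functions lie in $C(X)$, whence $\{f(p) : f \in C(X)\} = \mathbb{R}$, which is uncountable, a contradiction. Therefore no countable $A$ can work and $C_{u^I}(X)$ is not $\aleph_0$-bounded.

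The argument is short, and the only point requiring care is conceptual rather than computational: one must notice that in the $u^I$-topology every neighborhood of $\underline{0}$ forces its translates to preserve the value at each point of $\bigcap Z[I]$, so the genuine content is simply recognizing $\bigcap Z[I] \neq \emptyset$ as the hypothesis that manufactures uncountably many cosets distinguishable by a single evaluation. No appeal to the explicit metric of Theorem \ref{m} or to the finer structure of $I$ is needed.
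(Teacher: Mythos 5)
Your proposal is correct and follows essentially the same route as the paper: both fix a point $p\in\bigcap Z[I]$, note that every element of $B_u(\underline{0},I,1)$ lies in $I$ and hence vanishes at $p$, and use the constant functions to produce uncountably many values at $p$ that a countable translating set cannot cover. The paper phrases the contradiction by selecting a single real $r$ distinct from all $f_i(p)$, while you phrase it as a cardinality comparison, but the argument is the same.
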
 
\begin{proof}
	Choose a point $c$ from the set $\bigcap Z[I]\equiv \bigcap \limits_{g\in I} Z(g)$. The set $B_u(\underline{0},I,1)$ is an open neighborhood of $\underline{0}$ in $C_{u^I}(X)$. We claim that there does not exist any countable subset $D$ of $C(X)$ for which we can write $C(X)=B_u(\underline{0},I,1)+D$. Suppose the contrary and there exists a countable set $\{f_1,f_2,\cdots\}$ in $C(X)$ with $C(X)=B_u(\underline{0},I,1)+\{f_1,f_2,\cdots\}$. We select a real number $r\neq 0$ such that $r\neq f_i(c)$ for each $i\in \mathbb{N}$. Then $\underline{r}=g+f_k$ for some $k\in \mathbb{N}$ and $g\in B_u(\underline{0},I,1)$. This implies that $\underline{r}-f_k\in I$ and hence $r=f_k(c)$, a contradiction.
\end{proof}
\begin{theorem}
	The following statements are equivalent for an ideal $I$ in $C(X)$:
	\begin{enumerate}
		\item $C_{m^I}(X)$ is $\aleph_0$-$bounded$.
		\item $C_{u^I}(X)$ is $\aleph_0$-$bounded$.
		\item $X$ is compact, metrizable and $I=C(X)$.
		\item $C_{m^I}(X)$ is separable.
		\item $C_{m^I}(X)$ satisfies countable chain condition.
		\item $C_{m^I}(X)$ is second countable.
		\item $C_{m^I}(X)$ is Lindel\"{o}f.
		\item $C_{u^I}(X)$ is separable.
		\item $C_{u^I}(X)$ is Lindel$\ddot{o}$f.	
			\item $C_{u^I}(X)$ satisfies countable chain condition.
		\item $C_{u^I}(X)$ is second countable.
	\end{enumerate}
\end{theorem}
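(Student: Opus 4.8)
The plan is to prove the cyclic chain of equivalences by first establishing the central characterization $(1)\Leftrightarrow(3)$ and then showing that all the remaining ``smallness'' conditions on $C_{m^I}(X)$ and $C_{u^I}(X)$ are squeezed between them. My first step is to dispose of the ideal: I would show that $\aleph_0$-boundedness of either topological group forces $I=C(X)$. By Lemma \ref{bdd}, if $I$ is a fixed proper ideal then $C_{u^I}(X)$ is never $\aleph_0$-bounded; and since the $m^I$-topology is finer than the $u^I$-topology, $\aleph_0$-boundedness of $C_{m^I}(X)$ (a stronger-topology covering condition) likewise fails unless $\bigcap Z[I]=\emptyset$. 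Combined with the earlier observation that a free/essential ideal rendering any reasonable smallness must actually be all of $C(X)$, I expect to conclude $I=C(X)$ in each case, collapsing both topologies to the classical $u$- and $m$-topologies on $C(X)$.

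Once $I=C(X)$ is forced, the whole theorem reduces to the classical statement about $C_m(X)$ and $C_u(X)$, and here I would lean heavily on the cardinal-function identities of Section 5. The key structural fact is Theorem \ref{equi}: for $I=C(X)$ we have $w(C_m(X))=d(C_m(X))=c(C_m(X))=L(C_m(X))$, so second countability, separability, countable chain condition, and the Lindel\"of property for $C_{m^I}(X)$ are all \emph{automatically} equivalent to one another, collapsing items $(4)$, $(5)$, $(6)$, $(7)$ into a single condition. Moreover, by Theorem 5.4 (the character equals the dominating number) together with the first-countability characterization, second countability of $C_m(X)$ forces $\chi(C_m(X))=\aleph_0$, hence $dn(X)=\aleph_0$, hence $X$ is pseudocompact; and for submetrizable pseudocompact spaces metrizability is immediate (as invoked before Theorem \ref{Compact}). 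The remaining weight-$\aleph_0$ input then pins down that $X$ itself is compact and metrizable: a separable metrizable pseudocompact Tychonoff space is compact metrizable.

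For the reverse direction $(3)\Rightarrow$ everything, I would assume $X$ compact metrizable and $I=C(X)$. Then $C_u(X)=C_m(X)$ coincide (pseudocompactness, via the equivalences already recorded), and this common space is the space of continuous functions on a compact metric space under the sup-metric, which is a \emph{separable} complete metric space. Separability of a metric space yields second countability, Lindel\"ofness, the countable chain condition, and separability for both $C_{m^I}(X)$ and $C_{u^I}(X)$ at once, and in a metric group separability is equivalent to $\aleph_0$-boundedness; this simultaneously delivers items $(2)$, $(4)$ through $(11)$. So the concrete cycle I would write is: $(3)\Rightarrow(1),(2),(6),(11)$ by the separable-metric argument, then $(6)\Rightarrow(4)\Rightarrow(5)$ and $(6)\Rightarrow(7)$ and the $C_{u^I}$ analogues $(11)\Rightarrow(8)\Rightarrow(9)\Rightarrow(10)$ by Theorem \ref{equi} applied in each topology, and finally $(1)\Rightarrow(3)$ and $(2)\Rightarrow(3)$ through the ideal-collapse plus the dominating-number argument above.

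The main obstacle I anticipate is the forcing of $I=C(X)$ from the weak hypotheses $(4)$--$(10)$ that are phrased only for $C_{m^I}(X)$, rather than from $\aleph_0$-boundedness directly. Lemma \ref{bdd} is stated for $\aleph_0$-boundedness of the $u^I$-topology, so for the separability/Lindel\"of/ccc hypotheses I cannot simply quote it; instead I would argue that if $\bigcap Z[I]\neq\emptyset$, then picking $c\in\bigcap Z[I]$ and exploiting the constant functions $\underline{r}$ (which all lie in distinct cosets $\underline{r}+I$ since $\underline{r}-\underline{s}\in I$ forces $r=s$ by evaluating at $c$) produces an uncountable family of pairwise disjoint open sets, destroying the countable chain condition and hence, via Theorem \ref{equi}, every one of $(4)$--$(7)$. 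The delicate point is handling a \emph{free} proper ideal, where $\bigcap Z[I]=\emptyset$: there I must instead invoke that such $I$ is essential, so $I$-pseudocompactness would force pseudocompactness of $X$, and then carefully check that the dominating-number lower bounds of Theorems 5.7 and 5.9 still obstruct the countable cardinal functions unless $I$ exhausts $C(X)$. Threading these two cases (fixed proper versus free proper) into a uniform argument is where the real care is needed.
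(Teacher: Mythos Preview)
Your overall architecture is close to the paper's, but the place you flag as ``the main obstacle'' is a genuine gap, and your proposed resolution does not close it. For a \emph{free proper} ideal $I$ the inequalities $dn(X\setminus\bigcap Z[I])\le c(C_{m^I}(X))$ and $dn(X\setminus\bigcap Z[I])\le L(C_{m^I}(X))$ only yield $dn(X)\le\aleph_0$, i.e.\ $X$ is pseudocompact; they say nothing that distinguishes $I$ from $C(X)$. Your sentence ``a separable metrizable pseudocompact Tychonoff space is compact metrizable'' conflates $X$ with $C_m(X)$: nothing in your argument has made $X$ metrizable. So the free-proper-ideal case is not handled.

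The paper avoids this case entirely by a different route: it never tries to force $I=C(X)$ directly from $(4)$--$(10)$. Instead it funnels everything down to condition $(2)$ and then argues once. Concretely: $(4)\Leftrightarrow(5)\Leftrightarrow(6)\Leftrightarrow(7)$ by Theorem~\ref{equi}; $(6)\Rightarrow(10)$ because the $u^I$-topology is coarser than the $m^I$-topology; $(8)\Leftrightarrow(9)\Leftrightarrow(10)\Leftrightarrow(11)$ simply because $C_{u^I}(X)$ is metrizable (Theorem~\ref{m}) --- note Theorem~\ref{equi} is stated only for $C_{m^I}(X)$, so you cannot ``apply it in each topology'' as you propose; and $(8)\Rightarrow(2)$ since every separable topological group is $\aleph_0$-bounded. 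Now from $(2)$: Lemma~\ref{bdd} gives $I$ free; since the classical $u$-topology is coarser than the $u^I$-topology, $C_u(X)$ is $\aleph_0$-bounded, and the external result \cite[Theorem~2.4]{KH} gives $X$ compact and metrizable. The punchline is that in a compact space every proper ideal of $C(X)$ is fixed \cite[4.8]{GJ}, so a free ideal must be all of $C(X)$. This single line disposes of your ``free proper'' case without any dominating-number gymnastics, and it is the step your outline is missing.
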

\begin{proof}
	Since $C_{u^I}(X)$ is weaker than $C_{m^I}(X)$, $(1)\implies (2)$ is trivial.
	
	$(2)\implies (3)$: Let $(2)$ be true. Therefore from Lemma \ref{bdd}, it follows that $I$ is a free ideal in $C(X)$. Now the $u$-topology on $C(X)$ is weaker than the $u^I$-topology implies in view of the assumed condition (2) that $C_u(X)$ is $\aleph_0$-$bounded$. It follows from \cite[Theorem 2.4]{KH} that $X$ is compact and metrizable. The compactness of $X$ further implies that each proper ideal in $C(X)$ is fixed \cite[Theorem 4.8]{GJ}. Hence $I=C(X)$.
	
	$(3)\implies (1)$ follows immediately from Theorem 2.4 in \cite{KH}, because with $I=C(X)$, $C_{m^I}(X)=C_m(X)$.
	
	Equivalence of the statements $(4),(5),(6)$ and $(7)$ follow directly from Theorem \ref{equi}. Since every metric space with countable chain condition is separable, this implies the equivalence of (8) and (10). The statements (8), (9) and (11) are clearly equivalent because $C_{u^I}(X)$ is a metrizable space. $(3)\implies (4)$ follows from Theorem 3.6 in \cite{MKJ}. $(6)\implies (10)$ is immediate because $u^I$-topology is weaker than the $m^I$-topology. Since every separable topological group is $\aleph_0$-$bounded$ \cite[Corollary 3.4.8]{A}, $(8)\implies (2)$ follows.
\end{proof}

For a topological group, $\sigma$-compactness $\implies$ strict H-boundedness $\implies$ $H$-$boundedness$ $\implies$ $M$-$boundedness$. It turns out as the next result manifests that, for the space $C_{m^I}(X)$, these various type of boundedness are equivalent.

\begin{theorem}
	For an ideal $I$ in $C(X)$, the following statements are equivalent:
	\begin{enumerate}
		\item $C_{m^I}(X)$ is hemicompact.
		\item $C_{m^I}(X)$ is $\sigma$-compact.
		\item $C_{m^I}(X)$ is $strictly$ $H$-$bounded$.
		\item $C_{m^I}(X)$ is $H$-$bounded$.
		\item $C_{m^I}(X)$ is $strictly$ $M$-$bounded$.
		\item $C_{m^I}(X)$ is $M$-$bounded$.
		\item  $C_{u^I}(X)$ is $M$-$bounded$.
		\item $X$ is finite and $I=C(X)$.
		\item $C_{m^I}(X)$ is locally compact and $I$ is a free ideal in $C(X)$.
		\item $C_{m^I}(X)$ is locally countably compact and $I$ is a free ideal in $C(X)$.
		\item $C_{m^I}(X)$ is locally pseudocompact and $I$ is a free ideal in $C(X)$.
	\end{enumerate}
\end{theorem}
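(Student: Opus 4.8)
The plan is to run a cycle of implications, harvesting the purely group-theoretic part from the chain recorded just before the statement and isolating the one substantial step. First I would dispose of the soft implications. Since every singleton is compact, a hemicompact space is automatically $\sigma$-compact, giving $(1)\Rightarrow(2)$; the quoted chain $\sigma\text{-compact}\Rightarrow\text{strictly }H\text{-bounded}\Rightarrow H\text{-bounded}\Rightarrow M\text{-bounded}$ yields $(2)\Rightarrow(3)\Rightarrow(4)\Rightarrow(6)$. To fold in $(5)$ I would use game monotonicity: a winning strategy for B in the Hurewicz game (each $x$ lies in all but finitely many $A_n\cdot V_n$) is in particular a winning strategy in the Menger game (the $A_n\cdot V_n$ cover), so $(3)\Rightarrow(5)$, and a winning Menger strategy forces the Menger selection, so $(5)\Rightarrow(6)$. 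Finally, because the $u^I$-topology is coarser than the $m^I$-topology, every sequence of $u^I$-neighbourhoods of $\underline{0}$ is also a sequence of $m^I$-neighbourhoods, so $M$-boundedness of $C_{m^I}(X)$ passes to the coarser group: $(6)\Rightarrow(7)$.

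The conceptual core is $(6)\Rightarrow(8)$. Here I would first observe that $M$-boundedness implies $\aleph_0$-boundedness: feeding the constant sequence $V_n=V$ into the definition produces finite sets $A_n$ with $C(X)=\bigcup_n(A_n+V)=\bigl(\bigcup_n A_n\bigr)+V$, and $\bigcup_n A_n$ is countable. By the preceding theorem characterizing $\aleph_0$-boundedness of $C_{m^I}(X)$, this forces $X$ to be compact and metrizable with $I=C(X)$. For compact $X$ every $u\in C_+(X)$ is bounded below by a positive constant, so the $m^I$- and $u^I$-topologies coincide, and, $X$ being $I$-pseudocompact, $C_{m^I}(X)$ is completely metrizable (Theorem \ref{m} together with the metrizability theorem of Section 3). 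It then remains to rule out infinite $X$. Suppose $X$ is infinite; as $I=C(X)$ is free we have $X\setminus\bigcap Z[I]=X$ infinite, so Theorem \ref{c} guarantees that every compact subset of $C_{m^I}(X)$ has empty interior. But a completely metrizable $M$-bounded (Menger-bounded) group is $\sigma$-compact, so $C_{m^I}(X)$ would be a countable union of compact sets, each closed with empty interior and hence nowhere dense, contradicting the Baire property of $C_{m^I}(X)$ established in Section 4. Thus $X$ is finite and $(8)$ holds. The same reduction proves $(7)\Rightarrow(8)$: $\aleph_0$-boundedness of $C_{u^I}(X)$ again yields $X$ compact metrizable with $I=C(X)$, whence $C_{u^I}(X)=C_{m^I}(X)$ and the argument above applies.

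For the converse $(8)\Rightarrow(1)$ I would note that when $X$ is finite and $I=C(X)$ the condition $f-g\in I$ is vacuous, so the basic neighbourhoods $B_m(f,I,u)$ are exactly the open boxes $\prod_{x\in X}(f(x)-u(x),\,f(x)+u(x))$; hence $C_{m^I}(X)$ is $\mathbb{R}^{|X|}$ with its Euclidean topology, which is hemicompact. This closes the loop $(1)\Leftrightarrow\cdots\Leftrightarrow(8)$. Finally I would attach the local conditions. If $I$ is free then $\bigcap Z[I]=\emptyset$ and $X\setminus\bigcap Z[I]=X$; a compact (respectively countably compact, pseudocompact) neighbourhood of a point is a subset of that type with nonempty interior, so Theorem \ref{c} (in the forms $(1)\Leftrightarrow(5)$, $(3)\Leftrightarrow(5)$, $(4)\Leftrightarrow(5)$, the three boundedness notions being identified on $C_{m^I}(X)$ by Theorem \ref{Compact}) forces $X=X\setminus\bigcap Z[I]$ to be finite. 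Finiteness together with freeness gives $I=C(X)$: choosing $g_x\in I$ with $g_x(x)\neq0$ for each $x$, the function $\sum_{x\in X}g_x^2\in I$ is nowhere zero, hence a unit. Thus $(9),(10),(11)\Rightarrow(8)$, while $(8)\Rightarrow(9),(10),(11)$ because $\mathbb{R}^{|X|}$ is locally compact and $I=C(X)$ is free.

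The main obstacle is the single input invoked in the crux, namely that a completely metrizable $M$-bounded group is $\sigma$-compact; everything else is bookkeeping around the $\aleph_0$-boundedness theorem, Theorem \ref{c}, Theorem \ref{Compact} and the Baire property. I expect this to be where care is needed: one either cites the relevant result on Menger-bounded complete metric groups from \cite{KH}, or argues directly that for infinite compact metric $X$ the balls of $C(X)$ fail to be totally bounded, so that a suitably shrinking sequence $V_n=B(\underline{0},\varepsilon_n)$ cannot be covered by finitely many translates at each stage — a construction that is plausible but whose continuity bookkeeping (controlling the escaping function near an accumulation point of $X$) is delicate.
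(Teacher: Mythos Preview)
Your cycle of soft implications, the treatment of $(5)$ via $(3)\Rightarrow(5)\Rightarrow(6)$, and the handling of $(9)$--$(11)$ through Theorem~\ref{c} all match the paper. The divergence is at the crux $(7)\Rightarrow(8)$.

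The paper's route is shorter and avoids exactly the obstacle you flag. From $(7)$ it first observes (as you do) that $M$-boundedness implies $\aleph_0$-boundedness, hence by Lemma~\ref{bdd} the ideal $I$ is free. But then, instead of invoking the full $\aleph_0$-boundedness theorem and afterwards trying to exclude infinite compact $X$, the paper simply passes to the still coarser \emph{ordinary} $u$-topology: since $u\leq u^I$, the group $C_u(X)$ is $M$-bounded, and \cite[Theorem~2.8]{KH} gives directly that $X$ is finite. Compactness of $X$ then forces every proper ideal to be fixed, so the free ideal $I$ equals $C(X)$. Thus the paper never needs the statement ``a completely metrizable $M$-bounded group is $\sigma$-compact''.

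Your detour is salvageable in this particular setting---once $X$ is compact metrizable and $I=C(X)$, $C_{m^I}(X)=C_u(X)$ is a separable Banach space, and a Riesz-lemma nested-ball construction shows that no infinite-dimensional Banach space is $M$-bounded---but this is precisely the work that \cite[Theorem~2.8]{KH} already packages. The general Polish-group statement you reach for is not what \cite{KH} provides; that reference contains the $C_u(X)$-specific characterisation, which is why the paper's one-line descent to $C_u(X)$ is the cleaner move. Everything else in your proposal is in agreement with the paper's proof.
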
  
\begin{proof}
	The chain implication $(1)\implies(2)\implies(3)\implies(4)\implies(6)\implies(7)$ is immediate. Since $M$-boundedness $\implies\aleph_0$-boundedness, implies in view of Lemma \ref{Lemma} that $I$ is a free ideal in $C(X)$. Since the $u$-topology on $C(X)$ is weaker than the $u^I$-topology, the condition (7) further implies that $C_u(X)$ is $M$-$bounded$. This in turn implies in view of \cite[Theorem 2.8]{KH} that $X$ is finite and therefore a compact space and hence each proper ideal is fixed \cite[Theorem 4.8]{GJ}. Thus $I=C(X)$ and $(7)\implies (8)$ is proved. $(8)\implies (1) $ follows from Theorem 5.2 in \cite{MKJ}. $(8)\implies (9)$ also follows from Theorem 5.2 in \cite{MKJ}. $(9)\implies(10)\implies(11)$ is immediate.\\	
	$(11)\implies(8)$: Suppose $(11)$ is true. Then $I$ being a free ideal in $C(X)$ we have $\bigcap Z[I]=\emptyset$. It follows from  Theorem \ref{c} that $X$ is a finite set.
\end{proof} 
\section{Connectedness of $C_{u^I}(X)$ and $C_{m^I}(X)$}

The component of $\underline{0}$ in $C_{u^I}(X)$ is $C^*(X)\cap I$ \cite[Theorem 3.12]{PA2023} and in $C_{m^I}(X)$ is $C_\psi(X)\cap I$ \cite[Theorem 3.4]{A2017}. In this article we give an alternative proof of these known facts. Furthermore we show that $C^*(X)\cap I$ and $C_\psi(X)\cap I$ are the path components and also the quasi-components of $\underline{0}$ in the respective topologies.
\begin{lemma}\label{8.1}
	$C^*(X)\cap I$ is pathwise connected in $C_{u^I}(X)$.
\end{lemma}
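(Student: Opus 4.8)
The plan is to exploit the explicit complete metric $d$ furnished by Theorem \ref{m} together with the observation that $C^*(X)\cap I$ is a convex subset (indeed a linear subspace) of $C(X)$. Given two functions $f,g\in C^*(X)\cap I$, I would join them by the straight-line path $\gamma\colon[0,1]\to C(X)$ defined by $\gamma(t)=(1-t)f+tg$. Since $I$ is an ideal, it is closed under addition and under multiplication by the constant functions $1-t$ and $t$, so each $\gamma(t)\in I$; and a convex combination of bounded functions is bounded, so $\gamma(t)\in C^*(X)$. Hence the path stays inside $C^*(X)\cap I$, and it remains only to check that $\gamma$ is continuous into $C_{u^I}(X)$.

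For continuity I would compute the difference of two path points directly: for $s,t\in[0,1]$ one has $\gamma(s)-\gamma(t)=(t-s)(f-g)$, and because $f,g\in I$ this difference lies in $I$. Thus the second branch of the metric of Theorem \ref{m} applies, giving
\[
d(\gamma(s),\gamma(t))=\Big(\sup\{|t-s|\,|f(x)-g(x)|:x\in X\setminus\textstyle\bigcap Z[I]\}\Big)\wedge 1\le |s-t|\cdot\|f-g\|_\infty,
\]
where $\|f-g\|_\infty=\sup_{x\in X}|f(x)-g(x)|$ is finite precisely because $f,g\in C^*(X)$. This shows $\gamma$ is Lipschitz, hence continuous, so $\gamma$ is a path in $C^*(X)\cap I$ from $f$ to $g$. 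As $f,g$ were arbitrary, $C^*(X)\cap I$ is pathwise connected.

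The argument is short, and the only point needing care is the interplay between the two defining conditions of $C^*(X)\cap I$ and the two branches of the metric $d$. The membership $f-g\in I$ is what forces us into the supremum-branch (rather than the constant value $1$), while the boundedness of $f$ and $g$ is exactly what keeps that supremum finite and uniformly small as $s\to t$; were the functions unbounded on $X\setminus\bigcap Z[I]$, the supremum would equal $\infty$ for every $s\neq t$ and $\gamma$ would fail to be continuous. Thus no genuine obstacle arises, but both hypotheses (membership in $C^*(X)$ and in $I$) are used essentially, and I would state the continuity estimate carefully so that this dependence is transparent.
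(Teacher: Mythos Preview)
Your argument is correct and follows essentially the same idea as the paper's proof: the paper connects each $h\in C^*(X)\cap I$ to $\underline{0}$ via the straight-line path $t\mapsto th$ and verifies continuity using the basic open sets $B_u(p(t),I,\epsilon)$ directly, while you join arbitrary $f,g$ by the affine path and phrase the continuity estimate through the metric of Theorem~\ref{m}. The only cosmetic differences are (i) working with general endpoints rather than reducing to paths through $\underline{0}$, and (ii) invoking the metric to obtain a Lipschitz bound instead of the $\epsilon$--$\delta$ computation with basic neighborhoods; neither changes the substance of the proof.
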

\begin{proof}
	It is enough to show that each $h\in C^*(X)\cap I$ is pathwise connected to $\underline{0}$. Let $h\in C^*(X)\cap I$. Define $p:[0,1]\rightarrow C(X)$ by $p(t)=t \cdot h$ for all $t\in [0,1]$. Note that for any $t\in [0,1]$, $p(t)\in C^*(X)\cap I$, $p(0)=\underline{0}$ and $p(1)=h$. Let $t\in [0,1]$ and $B_u(p(t),I,\epsilon)$ be any basic open set containing $p(t)$ in $C_{u^I}(X)$. Since $h\in C^*(X)$, there exists an $M>0$ such that $|f(x)|<M$ for all $x\in X$. Take $\delta=\frac{\epsilon}{2M}$. Let $t'\in (t-\delta,t+\delta)\cap[0,1]$ and $x\in X$, we have $|p(t)(x)-p(t')(x)|=|t-t'||h(x)|<M\cdot \delta<\epsilon$ this implies $\sup\limits_{x\in X}|p(t)(x)-p(t')(x)|\leq \frac{\epsilon}{2}<\epsilon$ and also observe that $p(t)-p(t')\in I$. Hence, $p((t-\delta,t+\delta)\cap [0,1])\subseteq B_u(p(t),I,\epsilon) $ and that settles the continuity of $p$ at the point $t$. 
\end{proof}
\begin{theorem}\label{8.2}
	$C^*(X)\cap I$ is the path component, component and quasi-component of $\underline{0}$ in $C_{u^I}(X)$.
\end{theorem}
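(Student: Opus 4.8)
The plan is to establish the three characterizations in sequence, leveraging the previous lemma and the general topology of topological groups. Since $C_{u^I}(X)$ is a topological group (stated in the introduction), the component, path component, and quasi-component of $\underline{0}$ are all subgroups, and it suffices to identify them. First I would use Lemma \ref{8.1} to note that $C^*(X)\cap I$ is pathwise connected, hence connected; since the path component of $\underline{0}$ is contained in the component of $\underline{0}$, which in turn is contained in the quasi-component of $\underline{0}$, we immediately obtain the chain of inclusions
\[
C^*(X)\cap I \subseteq \text{(path component)} \subseteq \text{(component)} \subseteq \text{(quasi-component)}.
\]
Thus the entire theorem reduces to showing that the quasi-component of $\underline{0}$ is contained in $C^*(X)\cap I$.

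To close the loop I would show that $C^*(X)\cap I$ is itself clopen in $C_{u^I}(X)$, since the quasi-component of a point is always contained in every clopen set containing that point. That $C^*(X)\cap I$ is open follows because $B_u(\underline{0},I,1)\subseteq C^*(X)\cap I$: indeed if $g\in B_u(\underline{0},I,1)$, then $g\in I$ and $\sup_{x}|g(x)|<1$ forces $g$ bounded, so $g\in C^*(X)\cap I$; by the group structure $C^*(X)\cap I$ is then open as a union of translates of this neighborhood. For closedness, I would argue that the complement is open: if $f\notin C^*(X)\cap I$, I claim $B_u(f,I,1)$ misses $C^*(X)\cap I$. If $f\notin I$, then every $g\in B_u(f,I,1)$ satisfies $f-g\in I$, so $g\in I$ would force $f\in I$, a contradiction; hence no such $g$ lies in $C^*(X)\cap I$. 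If $f\in I$ but $f\notin C^*(X)$ (so $f$ is unbounded), then any $g\in B_u(f,I,1)$ has $\sup_x|f(x)-g(x)|<1$, whence $g$ is also unbounded and $g\notin C^*(X)$. In either case $B_u(f,I,1)\cap(C^*(X)\cap I)=\emptyset$, so $C^*(X)\cap I$ is closed.

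The main obstacle, though a mild one, is the careful handling of the two cases in the closedness argument, since membership in $B_u(f,I,1)$ encodes both the condition $f-g\in I$ and the uniform proximity $\sup_x|f(x)-g(x)|<1$; one must use the ideal membership to preserve the ``$\in I$'' component and the uniform bound to preserve boundedness. Once $C^*(X)\cap I$ is shown clopen, the quasi-component of $\underline{0}$ is forced inside it, completing the reverse inclusion and collapsing the chain above to equalities, so that $C^*(X)\cap I$ coincides simultaneously with the path component, component, and quasi-component of $\underline{0}$ in $C_{u^I}(X)$.
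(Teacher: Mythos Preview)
Your proposal is correct and follows essentially the same approach as the paper: establish the chain $C^*(X)\cap I\subseteq P\subseteq C\subseteq Q$ via Lemma~\ref{8.1}, then close it by showing $C^*(X)\cap I$ is clopen so that $Q\subseteq C^*(X)\cap I$. The only difference is that the paper imports the clopenness of $C^*(X)\cap I$ from \cite[Theorem 3.9(ii)]{PA2023}, whereas you supply a self-contained two-case argument; the logic is otherwise identical.
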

\begin{proof}
	It follows from Theorem 3.9(ii) in \cite{PA2023} and Lemma \ref{8.1} that $C^*(X)\cap I$ is a clopen, pathwise connected subset of $C_{u^I}(X)$ containing $\underline{0}$. Let $P$, $C $ and $Q$ be the path component, component and quasi-component of $\underline{0}$ in $C_{u^I}(X)$ respectively. Then $C^*(X)\cap I\subseteq P\subseteq C\subseteq Q$. Also $Q$ is the intersection of all clopen subset of $C_{u^I}(X)$ containing $\underline{0}$ implies that $Q\subseteq C^*(X)\cap I$. Hence we have $P=C=Q=C^*(X)\cap I$.
\end{proof} 
	\begin{lemma}\label{8.3}
		$C_\psi(X)\cap I$ is pathwise connected in $C_{m^I}(X)$.
	\end{lemma}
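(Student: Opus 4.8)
The plan is to show that every $h\in C_\psi(X)\cap I$ is pathwise connected to $\underline{0}$ in $C_{m^I}(X)$, which suffices by the standard fact that being joinable to a fixed point by a path is an equivalence relation on the path component. As in Lemma \ref{8.1}, I would take the natural straight-line path $p:[0,1]\to C(X)$ defined by $p(t)=t\cdot h$. First I would verify that the image stays inside $C_\psi(X)\cap I$: for each $t$, the function $t\cdot h$ lies in $I$ since $I$ is an ideal, and its support is contained in the support of $h$, hence pseudocompact (equivalently bounded, by Mandelker's result quoted in the proof of Theorem \ref{6.1}); thus $p(t)\in C_\psi(X)\cap I$ for all $t$, with $p(0)=\underline{0}$ and $p(1)=h$.

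The substance of the proof is the continuity of $p$ with respect to the $m^I$-topology on the codomain, and this is where the argument genuinely diverges from the $u^I$-case. Fix $t\in[0,1]$ and a basic open set $B_m(p(t),I,u)$ with $u\in C_+(X)$. I must produce $\delta>0$ so that $|t'-t|<\delta$ forces $p(t')\in B_m(p(t),I,u)$, i.e. $|t-t'|\,|h(x)|<u(x)$ for every $x\in X$ together with $p(t)-p(t')\in I$. The ideal-membership condition is immediate since $(t-t')h\in I$. The inequality $|t-t'|\,|h(x)|<u(x)$ for all $x$ simultaneously is the delicate point: because $u$ need only be strictly positive and may approach $0$, a single uniform bound $M$ on $|h|$ as used in Lemma \ref{8.1} is no longer enough to dominate $u$ pointwise. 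The key obstacle, and the reason the hypothesis $h\in C_\psi(X)$ rather than merely $h\in C^*(X)$ is essential, is to control $|h|/u$ off the support of $h$ and to exploit pseudocompactness on the support.

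The way I would resolve this is to split $X$ into the support $S=\cl_X(X\setminus Z(h))$ and its complement. On $X\setminus S$ we have $h\equiv 0$, so $|t-t'|\,|h(x)|=0<u(x)$ trivially for every choice of $t'$. On the pseudocompact set $S$ the continuous function $u$ attains a positive infimum; more carefully, since $S$ is a bounded (relatively pseudocompact) subset, the continuous function $|h|/u$ is bounded on $S$, say $|h(x)|/u(x)\le M$ for $x\in S$ with $M>0$. I would then set $\delta=\frac{1}{2M}$, so that for $|t-t'|<\delta$ and $x\in S$ we get $|t-t'|\,|h(x)|<\frac{1}{2M}\,M\,u(x)=\frac{1}{2}u(x)<u(x)$, while on $X\setminus S$ the inequality is automatic. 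This yields $p((t-\delta,t+\delta)\cap[0,1])\subseteq B_m(p(t),I,u)$ and establishes continuity of $p$ at $t$. The main point to be careful about in writing this up is justifying that $|h|/u$ is bounded on the bounded set $S$: this follows because $|h|/u$ is continuous on $X$ (as $u$ is strictly positive) and every continuous function is bounded on a bounded subset by the very definition of boundedness recalled in the introduction, so that the supremum is finite; and it is strictly positive-bounded away from forcing any failure precisely because $h$ vanishes outside $S$.
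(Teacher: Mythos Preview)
Your proof is correct and follows essentially the same approach as the paper: the straight-line path $t\mapsto t\cdot h$, the splitting of $X$ into the support $S=\cl_X(X\setminus Z(h))$ and its complement, and the observation that the ideal condition is automatic. The only cosmetic difference is that the paper bounds $|h|$ globally by some $M$ and bounds $u$ below by some $\epsilon>0$ on the pseudocompact support (taking $\delta=\epsilon/(2M)$), whereas you bound the single quotient $|h|/u$ on the bounded set $S$ and take $\delta=1/(2M)$; these are equivalent maneuvers.
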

\begin{proof}
	Let $h\in C_\psi(X)\cap I$. Define $q:[0,1]\rightarrow C(X)$ by $q(t)=t \cdot h$. Note that for any $t\in [0,1], q(t)\in C_\psi(X)\cap I$, $q(0)=0$ and $q(1)=h$. Let $t\in [0,1]$ and $B_m(q(t),I,u)$ be any basic open set containing $q(t)$ in $C_{m^I}(X)$. Since $h\in C_\psi(X)$, there exists an $M>0$, $\epsilon> 0$ such that $|h(x)|<M$ for all $x\in X$ and $\epsilon<u(x)$ for all $x\in cl_X(X\setminus Z(h))$. Set $\delta=\frac{\epsilon}{2M}$. Now for any $t'\in [0,1]\cap(t-\delta,t+\delta)$, we have $$|q(t)(x)-q(t')(x)|=|t-t'||h(x)|$$
	\hspace{8.4cm}$<\delta M$
	
	$\hspace{7.8cm}<\epsilon \text{ for all } x\in X$

	\hspace{7.8cm}$<u(x)$ for all $x\in \cl_X(X\setminus Z(h))$. \newline We further note that the above inequality is valid for each $x\in X\setminus \cl_X(X\setminus Z(f))$ because $h(x)=0$ and hence $q(t)(x)=q(t')(x)=0$. Also note that $q(t)-q(t')\in I$. Therefore, $q(t')\in B_m(q(t),I,u)$. Hence $q$ is continuous and $C_\psi(X)\cap I$ is path connected.
\end{proof}
\begin{lemma}\cite[Lemma 4.1]{MKJ}
	 For any $f\in C(X)$, $A_f=\{g\in C(X):f\cdot g\in C^*(X)\}$ is a clopen subset of $C_m(X)$ and hence clopen in $C_{m^I}(X)$, because the $m^I$-topology on $C(X)$ is finer than the $m$-topology.
	\end{lemma}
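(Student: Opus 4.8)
The plan is to exhibit a single positive continuous weight $u$ that simultaneously certifies both the openness and the closedness of $A_f$ in $C_m(X)$. First I would record that in $C_m(X)$ (the case $I=C(X)$, where the side condition $g-g_0\in I$ is vacuous) the basic neighbourhoods of a function $g_0$ are the sets $B_m(g_0,u)=\{g\in C(X):|g(x)-g_0(x)|<u(x)\text{ for all }x\in X\}$ with $u\in C_+(X)$. The key device is to take $u=\frac{1}{1+|f|}$, which lies in $C_+(X)$ since $1+|f(x)|\geq 1$ and $|f|$ is continuous. The point of this choice is that it forces $|f(x)|u(x)=\frac{|f(x)|}{1+|f(x)|}<1$ for every $x\in X$; that is, the weight $u$ tames the (possibly unbounded) growth of $f$, and the very same $u$ will serve in both halves of the argument.

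To prove openness I would take $g_0\in A_f$, so that $f\cdot g_0\in C^*(X)$, say $|f(x)g_0(x)|\leq M$ for all $x\in X$. For any $g\in B_m(g_0,u)$ the triangle inequality gives $|f(x)g(x)|\leq|f(x)g_0(x)|+|f(x)||g(x)-g_0(x)|<M+|f(x)|u(x)<M+1$ for all $x$, so $f\cdot g\in C^*(X)$ and hence $g\in A_f$. Thus $B_m(g_0,u)\subseteq A_f$, and $A_f$ is open.

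For closedness I would show that the complement is open. Take $g_0\notin A_f$, so $f\cdot g_0$ is unbounded. For any $g\in B_m(g_0,u)$ the reverse triangle inequality yields $|f(x)g(x)|\geq|f(x)g_0(x)|-|f(x)||g(x)-g_0(x)|>|f(x)g_0(x)|-1$; since $|f(x)g_0(x)|$ is unbounded, so is $|f(x)g(x)|$, whence $g\notin A_f$. Therefore $B_m(g_0,u)\cap A_f=\emptyset$, and $C(X)\setminus A_f$ is open, so $A_f$ is closed.

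Together these establish that $A_f$ is clopen in $C_m(X)$. The final clause then follows formally: because the $m^I$-topology is finer than the $m$-topology, every set open (respectively closed) in $C_m(X)$ is open (respectively closed) in $C_{m^I}(X)$, so $A_f$ remains clopen there as well. I expect no genuine obstacle beyond the single observation that the one fixed weight $u=\frac{1}{1+|f|}$ works for both directions; once this weight is in hand, the openness and closedness estimates are immediate applications of the triangle and reverse triangle inequalities.
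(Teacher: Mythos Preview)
Your proof is correct. The paper itself does not supply a proof of this lemma; it merely cites \cite[Lemma 4.1]{MKJ} for the fact that $A_f$ is clopen in $C_m(X)$ and then appends the one-line observation that the $m^I$-topology is finer than the $m$-topology. Your argument therefore goes beyond what the paper provides: you give a self-contained proof using the single weight $u=\frac{1}{1+|f|}$, which neatly handles both openness and closedness via the bound $|f|u<1$ and the (reverse) triangle inequality. This is exactly the standard argument one would expect, and the final transfer to $C_{m^I}(X)$ matches the paper's reasoning.
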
 
\begin{theorem}\label{8.5}
	The path component, component and quasi-component of \underline{0} in $C_{m^I}(X)$ are equal to $C_\psi(X)\cap I$. 
\end{theorem}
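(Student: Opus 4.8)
The plan is to establish the three-fold equality $P = C = Q = C_\psi(X)\cap I$, where $P$, $C$, $Q$ denote respectively the path component, component, and quasi-component of $\underline{0}$ in $C_{m^I}(X)$, by the same strategy used in Theorem \ref{8.2}. First I would show that $C_\psi(X)\cap I$ is a clopen subset of $C_{m^I}(X)$. By Lemma \ref{8.3} it is already pathwise connected, so the remaining task is to prove it is both open and closed. To see that it is open, it suffices (since $C_{m^I}(X)$ is a topological ring, hence a topological group under addition) to produce a basic neighborhood $B_m(\underline 0, I, u)$ of $\underline{0}$ contained in $C_\psi(X)\cap I$; every element of such a basic set automatically lies in $I$, and one chooses $u$ so as to force membership in $C_\psi(X)$ as well. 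For closedness, I would invoke the preceding lemma on the sets $A_f = \{g : f\cdot g\in C^*(X)\}$: each $A_f$ is clopen in $C_{m^I}(X)$, and one expresses $C_\psi(X)\cap I$ as an intersection of such clopen sets (intersected with the clopen set $I$-coset structure), showing the set is closed.

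Once clopenness is in hand, the chain of inclusions follows the template of Theorem \ref{8.2}. Since $C_\psi(X)\cap I$ is a pathwise connected set containing $\underline{0}$, it is contained in the path component $P$; trivially $P\subseteq C\subseteq Q$ by the general topological facts that path components lie inside components, which lie inside quasi-components. Conversely, because $Q$ is by definition the intersection of all clopen subsets of $C_{m^I}(X)$ containing $\underline{0}$, and $C_\psi(X)\cap I$ is one such clopen set, we obtain $Q\subseteq C_\psi(X)\cap I$. Stringing these together yields $C_\psi(X)\cap I\subseteq P\subseteq C\subseteq Q\subseteq C_\psi(X)\cap I$, forcing all four sets to coincide.

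I expect the main obstacle to be the verification that $C_\psi(X)\cap I$ is open (equivalently, closed) in $C_{m^I}(X)$, which is precisely where the $m^I$-topology, as opposed to the coarser $u^I$-topology, does the essential work. The subtlety is that pseudocompact support is not a uniform-norm condition, so one cannot mimic Lemma \ref{8.1} directly; instead the flexibility of choosing the positive function $u\in C_+(X)$ pointwise is what lets a basic $m^I$-neighborhood of $\underline 0$ be trapped inside functions of pseudocompact support. Here the characterization via the clopen sets $A_f$ from the cited lemma is the cleanest route, and I would lean on it rather than attempt a direct neighborhood computation. The rest of the argument is a routine transcription of the clopen-plus-connected scheme already executed for $C_{u^I}(X)$ in Theorem \ref{8.2}.
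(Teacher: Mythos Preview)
Your overall template is right, but there is a genuine gap in the plan for the reverse inclusion $Q\subseteq C_\psi(X)\cap I$. You propose to show that $C_\psi(X)\cap I$ is \emph{open} in $C_{m^I}(X)$ by exhibiting a basic neighborhood $B_m(\underline{0},I,u)\subseteq C_\psi(X)\cap I$. This fails whenever $X$ is not $I$-pseudocompact: pick $g\in I\setminus C_\psi(X)$ (such $g$ exists by Theorem~\ref{6.1}) and set $h=\dfrac{u\,g}{2(1+|g|)}$. Then $h\in I$, $|h|<u$, so $h\in B_m(\underline{0},I,u)$, yet $Z(h)=Z(g)$, so $h$ has the same non-pseudocompact support as $g$ and $h\notin C_\psi(X)$. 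Thus no basic neighborhood of $\underline{0}$ sits inside $C_\psi(X)\cap I$, and in fact $C_\psi(X)\cap I$ is \emph{not} open in $C_{m^I}(X)$ in general. (Your parenthetical ``open (equivalently, closed)'' is also off: these are not equivalent a priori.)

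The fix is exactly the $A_f$-route you mention, but used differently from how you describe it. You do not need $C_\psi(X)\cap I$ itself to be clopen; you only need it to be an \emph{intersection} of clopen sets each containing $\underline{0}$. By the cited lemma every $A_f$ is clopen in $C_{m^I}(X)$ and contains $\underline{0}$, and $I$ is clopen in $C_{m^I}(X)$ \cite[Proposition~2.2]{A2017}; since $C_\psi(X)=\bigcap_{f\in C(X)}A_f$, the quasi-component $Q$ (being the intersection of \emph{all} clopen sets containing $\underline{0}$) satisfies $Q\subseteq\bigl(\bigcap_f A_f\bigr)\cap I=C_\psi(X)\cap I$. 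This is precisely how the paper argues, and it closes the chain $C_\psi(X)\cap I\subseteq P\subseteq C\subseteq Q\subseteq C_\psi(X)\cap I$ without any openness claim.
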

\begin{proof}
 Let $P$, $C $ and $Q$ be the path component, component and the quasi-component of $\underline{0}$ in $C_{m^I}(X)$ respectively. Then from Lemma \ref{8.3}, $C_\psi(X)\cap I\subseteq P\subseteq C\subseteq Q$. Also $Q$ is the intersection of all clopen subset of $C_{m^I}(X)$ containing $\underline{0}$, implies $Q\subseteq \bigcap\limits_{g\in C(X)}A_g\cap I=C_\psi(X)\cap I$, here we use the fact that $I$ is clopen in $C_{m^I}(X)$ \cite[Proposition 2.2]{A2017} and a standard result $C_\psi(X)=\{f\in C(X):f\cdot g\in C^*(X) \text{ for all } g\in C(X)\}$. Hence we have $P=C=Q=C_\psi(X)\cap I$.
\end{proof}

\begin{theorem}
	The following conditions are equivalent.
\begin{enumerate}
		\item $C_{m^I}(X)$ is pathwise connected.
	\item $C_{m^I}(X)$ is connected.
			\item $C_{u^I}(X)$ is pathwise connected.
					\item $C_{u^I}(X)$ is connected.
	\item $I=C(X)$ and $X$ is pseudocompact.
	\item $C_{u^I}(X)$ is a topological vector space.
\end{enumerate}
	\end{theorem}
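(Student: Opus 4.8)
The plan is to prove the cycle of implications $(1)\Rightarrow(2)\Rightarrow(4)$, then $(1)\Rightarrow(3)\Rightarrow(4)$, then $(4)\Rightarrow(5)$, and finally $(5)\Rightarrow(6)\Rightarrow(1)$, exploiting the structural descriptions of the connectedness components already established in Theorems \ref{8.2} and \ref{8.5}. The implications $(1)\Rightarrow(2)$ and $(3)\Rightarrow(4)$ are immediate since pathwise connectedness always implies connectedness. Since the $m^I$-topology is finer than the $u^I$-topology, the identity map $C_{m^I}(X)\to C_{u^I}(X)$ is continuous, so $(1)\Rightarrow(3)$ (a path in the finer topology is a path in the coarser one) and $(2)\Rightarrow(4)$ (the continuous image of a connected space is connected) both follow at once.

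The heart of the argument lies in identifying connectedness of these spaces with the algebraic condition $(5)$. I would argue that a topological group is connected if and only if it coincides with the component of its identity $\underline{0}$; hence $C_{u^I}(X)$ is connected if and only if $C^*(X)\cap I=C(X)$ by Theorem \ref{8.2}, and $C_{m^I}(X)$ is connected if and only if $C_\psi(X)\cap I=C(X)$ by Theorem \ref{8.5}. For $(4)\Rightarrow(5)$: the equality $C^*(X)\cap I=C(X)$ forces $I=C(X)$ (since $\underline{1}\in I$ makes $I$ improper) and simultaneously $C(X)=C^*(X)$, which by a standard result characterizes pseudocompactness of $X$. Conversely, for $(5)\Rightarrow(1)$, when $I=C(X)$ and $X$ is pseudocompact we have $C_\psi(X)=C^*(X)=C(X)$, so the path component $C_\psi(X)\cap I$ from Lemma \ref{8.3} equals the whole space $C(X)$, giving pathwise connectedness of $C_{m^I}(X)$.

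To close the loop through $(6)$, I would first show $(5)\Rightarrow(6)$: under $I=C(X)$ with $X$ pseudocompact we have $C^*(X)=C(X)$, so the metric $d$ of Theorem \ref{m} reduces to the ordinary sup-metric on all of $C(X)$ and the scalar multiplication $(\lambda,f)\mapsto\lambda f$ is continuous (the obstruction to scalar continuity in the general $u^I$-topology is precisely unboundedness), making $C_{u^I}(X)$ a topological vector space. For $(6)\Rightarrow(1)$ the key observation is that in a topological vector space every point is joined to $\underline{0}$ by the straight-line path $t\mapsto tf$, which is continuous by joint continuity of scalar multiplication; hence $C_{u^I}(X)$ is pathwise connected, and since a topological vector space must be connected, this forces $C^*(X)\cap I=C(X)$, whence $(5)$ and thus $(1)$ by the equivalences already secured. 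I expect the main obstacle to be the careful verification that scalar multiplication is continuous in step $(5)\Rightarrow(6)$ and that its failure in general is exactly what obstructs the vector-space structure, since the whole delicacy of the $u^I$-topology resides in the interaction between the sup-norm bound and the ideal membership condition $f-g\in I$.
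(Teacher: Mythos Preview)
Your argument is correct and follows essentially the same route as the paper: both rely on Theorems \ref{8.2} and \ref{8.5} to identify the component of $\underline{0}$ in each topology, reducing connectedness to the algebraic equalities $C^*(X)\cap I=C(X)$ and $C_\psi(X)\cap I=C(X)$, from which $(5)$ drops out. The only difference is that the paper dispatches the link with $(6)$ by citing \cite[Theorem~3.4]{PA2023}, whereas you sketch a direct argument (sup-metric on a pseudocompact $X$ gives continuous scalar multiplication for $(5)\Rightarrow(6)$; straight-line paths in a topological vector space for $(6)\Rightarrow(3)$); your sketch is sound and makes the proof self-contained at the cost of a routine verification.
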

\begin{proof}
	Follows on combining Theorem \ref*{8.2}, Theorem \ref{8.5} and Theorem 3.4 in \cite{PA2023}.
\end{proof}
Next we show that $C_{m^I}(X)$ is extremally disconnected if and only if $I=\{\underline{0}\}$. Recall that a topological space $Z$ is called extremally disconnected if the closure of every open set is open in $Z$.
\begin{theorem}\label{Ex}
	Let $c\in X\setminus \bigcap Z[I]$. Then the following relations hold:
	\begin{enumerate}
		\item $c^+=\{f\in C(X):f(c)>0\}$ is open in $C_{u^I}(X)$.
		\item $c^0=\{f\in C(X):f(c)\geq 0\}$ is closed in $C_{u^I}(X)$.
		\item $c^0$ is not open in $C_{u^I}(X)$.
		\item $\cl _{u^I} c ^+=c^0$.
	\end{enumerate}
\end{theorem}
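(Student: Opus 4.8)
The plan is to handle the four assertions by analysing a single basic neighbourhood $B_u(f,I,\epsilon)$ and exploiting the fact that membership in this set controls the value at the one point $c$ through the uniform estimate $\sup_{x\in X}|f(x)-g(x)|<\epsilon$. Parts (1) and (2) will be entirely elementary and will not require $c\notin\bigcap Z[I]$; the genuine use of the hypothesis $c\in X\setminus\bigcap Z[I]$, via Lemma \ref{Lemma}, will appear only in (3) and (4).

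For (1), I would fix $f\in c^+$, so that $f(c)>0$, and take $\epsilon=f(c)$. Any $g\in B_u(f,I,\epsilon)$ satisfies $|f(c)-g(c)|\le\sup_{x\in X}|f(x)-g(x)|<f(c)$, whence $g(c)>0$ and $g\in c^+$; thus $B_u(f,I,\epsilon)\subseteq c^+$ and $c^+$ is open. For (2), I would run the identical estimate on the complement $\{f\in C(X):f(c)<0\}$: if $f(c)<0$ and $\epsilon=-f(c)$, then $g\in B_u(f,I,\epsilon)$ forces $g(c)<0$, so the complement of $c^0$ is open and $c^0$ is closed. (Both facts also follow from continuity of the evaluation map at $c$ in the coarser $u$-topology, but the direct estimate is cleaner here.)

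For (3) and (4) the key device is Lemma \ref{Lemma}, which supplies $h\in I$ with $|h|\le 1$ on $X$ and $h(c)=\tfrac12$; since $I$ is an ideal it is closed under multiplication by real constants, so any real multiple of $h$ again lies in $I$ and hence sits inside every $u^I$-ball centred at a function differing from it by an element of $I$. To prove (3) I would test openness at $\underline 0\in c^0$: for arbitrary $\epsilon>0$ the function $g=-\tfrac{\epsilon}{2}h$ lies in $B_u(\underline 0,I,\epsilon)$ because $\underline0-g=\tfrac{\epsilon}{2}h\in I$ and $\sup_{x\in X}|g(x)|\le\tfrac{\epsilon}{2}<\epsilon$, yet $g(c)=-\tfrac{\epsilon}{4}<0$, so $g\notin c^0$; hence no neighbourhood of $\underline0$ lies in $c^0$ and $c^0$ is not open. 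For (4), the inclusion $\cl_{u^I}c^+\subseteq c^0$ is immediate from $c^+\subseteq c^0$ together with part (2). For the reverse inclusion I would take $f\in c^0$ with $f(c)=0$ (the case $f(c)>0$ being trivial) and show every $B_u(f,I,\epsilon)$ meets $c^+$: the function $g=f+\tfrac{\epsilon}{2}h$ lies in $B_u(f,I,\epsilon)$ by the same two checks, while $g(c)=\tfrac{\epsilon}{4}>0$ places $g$ in $c^+$, so $f\in\cl_{u^I}c^+$.

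The only subtlety, and the sole place where the standing hypothesis $c\in X\setminus\bigcap Z[I]$ is indispensable, is the perturbation step in (3) and (4): I must move the value at $c$ off zero while staying inside $I$, so as to remain in the $u^I$-ball, and this is possible precisely because some member of $I$ fails to vanish at $c$, which is exactly the content of Lemma \ref{Lemma}. I expect no further difficulty beyond keeping the two membership conditions defining $B_u$ simultaneously satisfied by the chosen perturbations.
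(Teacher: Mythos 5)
Your proposal is correct and follows essentially the same route as the paper's own proof: direct uniform estimates on a basic ball for (1) and (2), and for (3) and (4) the perturbation $\pm\frac{\epsilon}{2}h$ with $h\in I$, $|h|\le 1$, $h(c)=\frac12$ supplied by Lemma \ref{Lemma}, which is exactly the device the paper uses. The only differences are cosmetic (you take $\epsilon=f(c)$ where the paper takes $f(c)/2$, and you phrase (2) via the open complement rather than exhibiting a ball missing $c^0$).
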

\begin{proof}
\	\begin{enumerate}
		\item For any $f\in c^+$, we have $B_u(f,I,\frac{f(c)}{2})\subseteq c^+$ and hence $c^+$ is open in $C_{u^I}(X)$.
		\item Let $h\notin c^0$, then $h(c)<0$. It is easy to verify that $B_u(h,I,\frac{|h(c)|}{2})\cap c^0=\emptyset$. Therefore $c^0$ is closed in $C_{u^I}(X)$.
		\item To show that $c^0$ is not open in $C_{u^I}(X)\cap I$, it is enough to show $\underline{0} $ is not an interior point of $c^0$. If possible, let $\underline{0}$ be an interior point of $c^0$. Then there is a $\epsilon>0$ such that $B_u(\underline{0},I,\epsilon)\subseteq c^0$. Now by Lemma \ref{Lemma}, there is an $h\in I$ such that $|h|\leq1$ and $h(c)=\frac{1}{2}$. Therefore $-\frac{1}{2}h \epsilon\in B_u(\underline{0},I,\epsilon) $ but $-\frac{1}{2}h\epsilon (c)=-\frac{1}{4}\epsilon<0$, which implies that $-\frac{1}{2}h\notin c^0$, a contradiction. 
		\item It suffices to show $c^0\subseteq \cl_{u^I}c^+$. Let $f\in c^0$ and $B_u(f,I,\epsilon)$ be any basic open set containing $f$. By Lemma \ref{Lemma}, there is an $h\in I$ such that $|h|\leq1$ and $h(c)=\frac{1}{2}$. So $f+h\cdot \frac{\epsilon}{2}\in B_u(f,I,\epsilon)\cap c^+$. Consequently, $f\in \cl_{u^I}c^+$.
	\end{enumerate}
\end{proof}
The $m$-analogue of the above result is also true.
\begin{theorem}
	Let $c\in X\setminus \bigcap Z[I]$, then the following relations hold:
		\begin{enumerate}\label{ex}
		\item $c^+=\{f\in C(X):f(c)>0\}$ is open in $C_{m^I}(X)$.
		\item $c^0=\{f\in C(X):f(c)\geq 0\}$ is closed in $C_{m^I}(X)$.
		\item $c^0$ is not open in $C_{m^I}(X)$.
		\item $\cl _{m^I} c ^+=c^0$.
	\end{enumerate}
\end{theorem}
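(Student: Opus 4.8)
The plan is to follow the proof of Theorem~\ref{Ex} for the $u^I$-topology almost verbatim, substituting for each constant radius $\epsilon$ a suitably chosen strictly positive continuous function $u\in C_+(X)$. Since every positive constant function already lies in $C_+(X)$, parts (1) and (2) require essentially no change. For (1), given $f\in c^+$ I would take the constant $u\equiv\tfrac{f(c)}{2}\in C_+(X)$ and observe that $g\in B_m(f,I,u)$ forces $|g(c)-f(c)|<\tfrac{f(c)}{2}$, whence $g(c)>\tfrac{f(c)}{2}>0$ and $g\in c^+$; thus $B_m(f,I,u)\subseteq c^+$ and $c^+$ is open. For (2), given $h\notin c^0$ (so $h(c)<0$) I would take $u\equiv\tfrac{|h(c)|}{2}$ and check that $B_m(h,I,u)\cap c^0=\emptyset$, so $c^0$ is closed.

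For (3) I would give the argument that genuinely uses the $m^I$-structure, namely that $\underline{0}$ is not an interior point of $c^0$. Assuming to the contrary some $u\in C_+(X)$ with $B_m(\underline{0},I,u)\subseteq c^0$, I invoke Lemma~\ref{Lemma} to produce $h\in I$ with $|h|\leq 1$ on $X$ and $h(c)=\tfrac12$. The candidate witness is $-\tfrac12 hu$: it lies in $I$ because $h\in I$ and $I$ is an ideal, and it lies in $B_m(\underline{0},I,u)$ because $\bigl|-\tfrac12 h(x)u(x)\bigr|=\tfrac12|h(x)|u(x)\leq\tfrac12 u(x)<u(x)$ for every $x\in X$. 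Yet $\bigl(-\tfrac12 hu\bigr)(c)=-\tfrac14 u(c)<0$, so $-\tfrac12 hu\notin c^0$, contradicting the inclusion $B_m(\underline{0},I,u)\subseteq c^0$.

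For (4) I would first note $\cl_{m^I}c^+\subseteq c^0$ since $c^0$ is closed by (2) and $c^+\subseteq c^0$, so it remains to prove $c^0\subseteq\cl_{m^I}c^+$. Given $f\in c^0$ and an arbitrary basic neighborhood $B_m(f,I,u)$, take the same $h\in I$ furnished by Lemma~\ref{Lemma} and consider $f+\tfrac12 hu$. The same two estimates, $\tfrac12 hu\in I$ and $\tfrac12|h|u\leq\tfrac12 u<u$ pointwise, place $f+\tfrac12 hu$ in $B_m(f,I,u)$, while $\bigl(f+\tfrac12 hu\bigr)(c)=f(c)+\tfrac14 u(c)>0$ (using $f(c)\geq 0$) places it in $c^+$. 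Hence every basic neighborhood of $f$ meets $c^+$, giving $f\in\cl_{m^I}c^+$.

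The proof carries no real obstacle; the one point demanding care, shared by (3) and (4), is that the perturbation $\tfrac12 hu$ must simultaneously belong to the ideal $I$ and respect the pointwise radius constraint $|\,\cdot\,|<u$ at \emph{every} $x\in X$ rather than at the single point $c$. Both conditions are secured by the properties $|h|\leq 1$ on $X$ and $h\in I$ coming from Lemma~\ref{Lemma}, which is precisely why that lemma is the right replacement for the bare constant $\epsilon$ used in the $u^I$-setting.
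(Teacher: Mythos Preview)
Your proof is correct and is exactly the argument the paper has in mind: the paper states this theorem as ``the $m$-analogue of the above result'' immediately after Theorem~\ref{Ex} and gives no separate proof, so the intended argument is precisely the line-by-line translation of the $u^I$-proof with $\epsilon$ replaced by a function $u\in C_+(X)$ and the perturbation $\tfrac12 h\epsilon$ replaced by $\tfrac12 hu$, using Lemma~\ref{Lemma} in the same places. Your verification that $\tfrac12 hu\in I$ and $|\tfrac12 hu|<u$ pointwise is the only new check needed, and you handle it correctly.
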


\begin{theorem}
	The following statements are equivalent.
	\begin{enumerate}
		\item $C_{u^I}(X)$ is the discrete space on $C(X)$.
		\item $C_{m^I}(X)$ is the discrete space on $C(X)$.
		\item $C_{u^I}(X)$ is extremally disconnected.
		\item $C_{m^I}(X)$ is extremally disconnected.
		\item $I=\{\underline{0}\}$.
	
		\item $C_{m^I}(X)$ is a $P$-space.
		\item $C_{u^I}(X)$ is a $P$-space.
	\end{enumerate}
\end{theorem}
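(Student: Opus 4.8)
The plan is to run a web of implications anchored at condition (5), $I=\{\underline{0}\}$, exploiting the structural facts recorded in Theorem \ref{Ex} together with its $m$-analogue and the metrizability result of Theorem \ref{m}. First I would dispose of the forward implications out of (5). When $I=\{\underline{0}\}$ the membership requirement $f-g\in I$ that appears in the definitions of both $B_u(f,I,\epsilon)$ and $B_m(f,I,u)$ collapses to $g=f$; hence every basic neighborhood is the singleton $\{f\}$, so both $C_{u^I}(X)$ and $C_{m^I}(X)$ are discrete. This gives $(5)\Rightarrow(1)$ and $(5)\Rightarrow(2)$. A discrete space has every subset clopen, so it is automatically extremally disconnected, and since every subset is open, every $G_\delta$ set is open, so it is a $P$-space; this yields $(1)\Rightarrow(3)$, $(1)\Rightarrow(7)$, $(2)\Rightarrow(4)$ and $(2)\Rightarrow(6)$.

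Next I would close the loop through the extremal-disconnectedness conditions by contraposition. Suppose $I\neq\{\underline{0}\}$, so there is a point $c\in X\setminus\bigcap Z[I]$. Theorem \ref{Ex} tells us that in $C_{u^I}(X)$ the set $c^+$ is open, that $\cl_{u^I}c^+=c^0$, and that $c^0$ is not open. Thus the closure of the open set $c^+$ fails to be open, so $C_{u^I}(X)$ is not extremally disconnected; this is exactly $(3)\Rightarrow(5)$. Word for word, using the $m$-analogue of Theorem \ref{Ex}, the same set $c^+$ witnesses that $C_{m^I}(X)$ is not extremally disconnected, which gives $(4)\Rightarrow(5)$.

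Finally I would handle the $P$-space conditions via submetrizability. Since the metric furnished by Theorem \ref{m} induces the $u^I$-topology, which is coarser than the $m^I$-topology, both $C_{u^I}(X)$ and $C_{m^I}(X)$ are submetrizable. In any submetrizable space each singleton is a $G_\delta$ set: writing a point as a countable intersection of balls of the coarser metric topology exhibits it as a countable intersection of open sets in the finer topology as well. If the space is a $P$-space, then every $G_\delta$ set is open, so each singleton is open and the space is discrete. This establishes $(6)\Rightarrow(2)$ and $(7)\Rightarrow(1)$, and tracing the arrows shows that $(1)$ through $(7)$ are mutually equivalent.

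I anticipate the main obstacle to be the correct deployment of the submetrizable $P$-space argument, namely the observation that singletons remain $G_\delta$ when one passes to the finer $m^I$-topology. Once that point is in hand, the remaining implications are either immediate from the definitions of $B_u$ and $B_m$ or direct consequences of the already-proven structural Theorem \ref{Ex} and its $m$-analogue.
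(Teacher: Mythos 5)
Your proposal is correct and follows essentially the same route as the paper: condition (5) forces every basic neighborhood to be a singleton, Theorem \ref{Ex} and its $m$-analogue refute extremal disconnectedness whenever $I\neq\{\underline{0}\}$ via the point $c\in X\setminus\bigcap Z[I]$, and the $P$-space conditions collapse to discreteness because singletons are $G_\delta$. The only cosmetic difference is that the paper exhibits the $G_\delta$ directly as $\bigcap_{n\in\mathbb{N}}B_u(f,I,\tfrac{1}{n})=\{f\}$ rather than invoking submetrizability, and you additionally spell out the trivial implications $(1)\Rightarrow(7)$ and $(2)\Rightarrow(6)$ that the paper leaves implicit.
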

\begin{proof}
	$(5)\Rightarrow(1)\Rightarrow(2)\Rightarrow (4)$ and $(5)\Rightarrow (1)\Rightarrow(3)$ are immediate.
	
	$(3)\Rightarrow (5)$: It is easy to check $I=\{\underline{0}\}$ if and only if  $\bigcap Z[I]=X$. If possible, let $c\in X\setminus \bigcap Z[I]$. Then by Theorem \ref{Ex}, $\cl_{u^I} c^+$ is not open, which is a contradiction to the assumed extremal disconnectedness of $C_{u^I}(X)$.
	
	$(4)\Rightarrow (5)$: Analogues as above.

	$(7)\Rightarrow(1)$: For any $f\in C(X)$, $\bigcap\limits_{n\in \mathbb{N}} B_u(f,I,\frac{1}{n})=\{f\}$, which is open in $C_{u^I}(X)$. Hence, $C_{u^I}(X)$ is discrete.
	
	$(6)\Rightarrow (2)$: Analogous as above.
	\end{proof}

\section{Completeness conditions on the space $C_{m^I}(X)$}
 We begin this section by proving the following theorem which we feel, is of independent interest.\begin{theorem}\label{Baire}
 	For any ideal $I$ in $C(X)$, if $C_{m^I}(X)$ is a hereditary Baire space, then $X$ is $I$-$pseudocompact$.
 \end{theorem}
\begin{proof}
 Assume that $X$ is not $I$-$pseudocompact$. Then $X\setminus \bigcap Z[I]$ contains a copy of $\mathbb{N}$, $D=\{x_1,x_2\cdots\}$, C-embedded in $X$. Define for each $n\in \mathbb{N}$, $$H_n=\{f\in C(X):f(x_k)=0\text{ for all }k\geq n\}$$ and $H=\bigcup\limits_{n=1}^\infty H_n$. We first show that each $H_k$ is closed in $C(X)$ in the $m^I$-topology. Choose $f\in C(X)$ such that $f\notin H_k$. Then $f(x_p)\neq 0$ for some $p\geq k$. Let $\epsilon(x)=$ the constant function $\frac{1}{2}|f(x_p)|$. We claim that $B_m(f,I,\epsilon)\cap H_k=\emptyset$ and hence $H_k$ is closed in $C_{m^I}(X)$. Towards the proof of last claim we observe that if $g\in B_m(f,I,\epsilon)$, then $|g(x_p)-f(x_p)|<\epsilon(x_p)=\frac{1}{2}|f(x_p)|,$ which implies $g(x_p)\neq0$ and hence $g\notin H_k$. We next assert that the interior of $H_k$ in the space $C_{m^I}(X) $ is void. If possible, let this interior be non void. Then there exists $u\in C_+(X)$ such that $B_m(g,I,u)\subseteq H_k$ for some $g\in H_k$. Now $g\in H_k$ implies that $g(x_{k+1})=0$. On the other hand $x_{k+1}\in X \setminus \bigcap Z[I]$ implies in view of Lemma \ref{Lemma} that there exists $h\in I$ such that $|h|\leq 1$ on $X$ and $h(x_{k+1})=\frac{1}{2}$. This implies that $g+\frac{1}{2}h\cdot u\notin H_k$. But $g+\frac{1}{2}hu-g=\frac{1}{2}hu\in I$ and $\frac{1}{2}h(x) u(x)<u(x)$ for each $x\in X$, consequently $g+\frac{1}{2}hu\in B_m(g,I,u),$ a contradiction. Thus it is settled that each $H_k$ has void interior in the space $C_{m^I}(X)$. To complete the present  implication relation it suffices to show that $H$ is closed in $C_{m^I}(X)$. Let $l\in C(X)\setminus H$. Then for each $n\in \mathbb{N}$, $l\notin H_n$. Therefore there exists a countably infinite subset $D_0=\{x_{n_1},x_{n_2},\cdots\} $ of $D$ such that $l(x_{n_i})\neq 0$ for each $i\in \mathbb{N}$. Since $D_0$ is C-embedded in $X$,there exists $u\in C_+(X)$ such that $u(x_{n_i})=\frac{1}{2}|l(x_{n_i})|$ for each $i\in \mathbb{N}$. We assert that $B_m(l,I,u)\cap H=\emptyset$ and hence $H$ is closed in $C_{m^I}(X)$. Towards the assertion we note that $h\in B_m(l,I,u)$ implies that $|h(x_{n_i})-l(x_{n_i})|<\frac{1}{2}|l(x_{n_i})| $ for each $i\in \mathbb{N}$ and therefore $h(x_{n_i})\neq0$ for each $i\in \mathbb{N}$ and hence $h\notin H$.
\end{proof}
 So far, the ideal $I$ in $C(X)$ considered throughout this article is absolutely arbitrary. But this time in the remaining portion of the final section, some natural condition has to be imposed on $I$ for the equivalence of several statements related to completeness of the space $C_{m^I}(X)$.
\begin{theorem}\label{complete}
	 The metric $d$ on $C(X)$ considered in Theorem \ref{m} is complete if and only if $I$ is closed under uniform limits.
\end{theorem}
\begin{proof}
	First let $I$ be closed under uniform limits and $\{f_n\}$ be a Cauchy sequence of $(C(X),d)$. Then for any $0<\epsilon<1$, there exists $k\in \mathbb{N}$ such that $d(f_n,f_m)<\epsilon$ for all $n,m\geq k$. This yields for $m,n\geq k$, $\sup\limits_{x\in X}|f_m(x)-f_n(x)|<\epsilon$  and $f_m-f_n\in I$. This clearly shows that for each $x\in X$, $\{f_n(x)\}$ is a Cauchy sequence in $\mathbb{R}$ and therefore $\lim\limits_{n\rightarrow\infty} f_n(x)$ exists, and equals to $f(x)$ say for each $x\in \mathbb{R} $. Now since $|f_m(x)-f_n(x)|<\epsilon$ for all $m,n\geq k$ and for all $x\in X$, on taking limit as $n\rightarrow\infty$, this yields $|f(x)-f_n(x)|\leq \epsilon$ for all $n\geq k$ for all $x\in X$ and hence $\sup\limits_{x\in X}|f(x)-f_n(x)|\leq \epsilon$ for all $n\geq k$. Thus $\{f_n\}$ converges to $f$ uniformly on $X$ and so $f\in C(X)$. Since $f_n-f_m\in I$ for each $n,m\geq k$ and $I$ is closed under uniform limit, this clearly implies that $f_n-f\in I$ for all $n\geq k$. Then we can write $d(f_n,f)\leq\epsilon$ for all $n\geq k$. Hence, $d$ is a complete metric on $C(X)$.
	
	To prove the converse part let $d$ be a complete metric on $C(X)$ yet be not closed under uniform limits. This means that there exists a sequence $\{f_n\}$ in $I$, which converges uniformly to some $f \in C(X)$ but $f\notin I$. Therefore, given $0<\epsilon<1$, there exists $k\in \mathbb{N}$ such that $\sup\limits_{x\in X}|f_n(x)-f(x)|<\frac{1}{2}\epsilon$ for all $n\geq k$, this implies that for all $m,n\geq k$, $\sup\limits_{x\in X}|f_n(x)-f_m(x)|\leq\epsilon$. Since $f_m-f_n\in I $ for each $m,n\in \mathbb{N}$, this further implies that $d(f_m,f_n)\leq \epsilon$ for all $n\geq k$. Thus $\{f_m\}$ is a Cauchy sequence in the metric space $(C(X),d)$. Since the metric $d$ already induces the $u^I$-topology on $C(X)$[ vide Theorem \ref{m}], to get the desired contradiction, we shall show that $\{f_n\}$ does not converge to any limit $f$ in the space $C_{u^I}(X)$. As, $f_n\in I$ for each $n\in \mathbb{N}$, while $f\notin I$ and $I$ is an ideal in $C(X)$, this implies that $f_n-f \notin I$ for each $n\in \mathbb{N}$. Consequently, $d(f_n,f)=1$ for each $n\in \mathbb{N}$. Therefore, $f$ is not the limit of the sequence $\{f_n\}$ in $C_{u^I}(X)$. Again if some $g(\neq f)$ in $C(X)$ happen to be the limit of $\{f_n\}$ in the $u^I$-topology on $C(X)$, then since the  $u^I$-topology on $C(X)$ is finer than the topology of uniform convergence on $C(X)$, this implies that $\{f_n\}$ converges to $g$ uniformly over $X$ and hence $f=g$, a contradiction. Thus in any case $\{f_n\}$ does not converge to any limit $f$ in $C_{u^I}(X)$.
\end{proof}
 We feel, the next theorem is also of independent interest. \begin{theorem}
 	Suppose the ideal $I$ in $C(X)$ is closed under uniform limits. Then $C_{m^I}(X)$ is a Baire space.
 \end{theorem} 
 \begin{proof}
 	Let $\{U_n:n\in \mathbb{N}\}$ be a countable family of dense open subsets of $C_{m^I}(X)$. We need to show that $\bigcap\limits_{n=1}^\infty U_n$ is dense in $C_{m^I}(X)$. Let $f\in C(X)$ and $v\in C_+(X)$ . Since $U$ is dense in $C_{m^I}(X)$, then there exists $f_1\in B_m(f,I,v)\cap U_1$. Now we can find out $v'\in C_+(X)$ such that $B_m(f_1,I,v')\subseteq B_m(f,I,v)\cap U_1$. Let $v_1(x)=\min\{v(x),v'(x),1\}$ for all $x\in X$. Then $B_m(f_1,I,v_1)\subseteq B_m(f,I,v)\cap U_1$. In the next step there exists an $f_2\in B_m(f_1,I,\frac{v_1}{3})\cap U_2$. So by induction, for each $n\in \mathbb{N}$ we can find out $f_n\in C(X)$ and $u_n\in C_+(X)$ such that, $B_m(f_n,I,v_n)\subseteq B_m(f_{n-1},I,\frac{v_{n-1}}{3})\cap U_n$ and $v_n\leq \frac{1}{3}v_{n-1}$. Consequently, $\bigcap\limits_{n=1}^\infty B_m(f_n,I,v_n)\subseteq B_m(f,I,v)\cap \bigcap\limits_{n=1}^\infty U_n$. Now it is sufficient to show that $\bigcap\limits_{n=1}^\infty B_m(f_n,I,v_n)$ is non empty. Let $\epsilon>0$, $x\in X$ and $k_1$ be a positive integer such that $\frac{1}{3^{k_1-1}}<\epsilon$. Then for all $n\geq m\geq k_1$, $|f_n(x)-f_m(x)|<v_m(x)<\frac{1}{3^{m-1}}<\frac{1}{3^{k_1-1}}<\epsilon$, for all $x\in X$. So $\{f_n\}$ is a Cauchy sequence in $C_{u^I}(X)$ and hence $\{f_n\}$ converges uniformly to some $h\in C(X)$, because $C_{u^I}(X)$ is a complete metrizable space by Theorem \ref{complete}. We show that $h\in \bigcap\limits_{n=1}^\infty B_m(f_n,I,v_n)$. If possible, let there exist a $k\in\mathbb{N}$ such that $h\notin B_m(f_k,I,v_k)$.  First we show that $f_k-h \in I$. Since $B_m(h,I,v_k)$ is an open set containing $h$, which is the uniform limit of the sequence $\{f_n\}$, then there exits an $m\in \mathbb{N}$ such that $f_n\in B_m(h,I,v_k)$ for all $n\geq m$. If $k\geq m$, it is immediate that $f_k-h\in I$. Now let $k<m$ then $f_m\in B_m(h,I,v_k)$ and hence $f_m-g\in I $. Since $f_m\in B_m(f_k,I,v_k)$ implies $f_k-f_m\in I$. Therefore $f_k-h\in I$. Now as $h\notin B_m(f_,I,v_k)$, then there exists a $c\in X\setminus \bigcap Z[I]$ such that $|f_k(c)-h(c)|\geq v_k(c)$. But $\{f_n(c)\}$ is a sequence of real number which converges to $h(c)$. So there exists a natural number $m\geq k$ such that $|f_n(c)-h(c)|<v_{k+1}(c)$ for all $n\geq m$. In particular $|f_m(c)-h(c)|<v_{k+1}(c)<\frac{1}{3}v_k(c)$. Therefore $|f_k(c)-h(c)|\leq |f_k(c)-f_m(c)|+|f_m(c)-h(c)|<\frac{1}{3}v_k(c)+\frac{1}{3}v_k(c)<v_k(c)$ which is a contradiction.%{\tiny }
 \end{proof}
The next theorem tells amongst others that with the additional hypothesis, imposed on $I$, the hereditary Baireness of $C_{m^I}(X)$ is equivalent to the $I$-$pseudocompactness$ of $X$.  
\begin{theorem} For an ideal $I$ in $C(X)$ which is closed under uniform limits, the following statements are equivalent:\begin{enumerate}
		\item The space $C_{m^I}(X)$ is completely metrizable.
		\item $C_{m^I}(X)$ is \v{C}ech-complete.
		\item $C_{m^I}(X)$ is hereditarily Baire.
		\item $X$ is $I$-pseudocompact.
		
		\item  $C_{m^I}(X)$ is locally \v{C}ech-complete.
		
		\item   $C_{m^I}(X)$ is an open continuous image of a paracompact \v{C}ech-complete space.
		\item  $C_{m^I}(X)$ is an open continuous image of a \v{C}ech-complete space. 
		
	\end{enumerate}
\end{theorem}
\begin{proof}
The implication $(3)\Rightarrow (4)$ is already proved in Theorem \ref{Baire}. The remaining implications of this theorem can be proved on using Theorem \ref{Ipsu}, Theorem \ref{Baire} and following closely the arguments in the proof of \cite[Theorem 2.10]{MKJ}.
\end{proof}

\begin{center}
	\large
	\textbf{Declarations}
\end{center}

\textbf{Author’s Contribution:} All authors have equal contribution.

\textbf{Ethical Approval:} Not applicable.

\textbf{Competing interests:} Not applicable.

\textbf{Funding:}  The first author is immensely grateful for the award of research fellowship
provided by the University Grants Commission, New Delhi (NTA Ref. No.
211610214962).

\textbf{Availability of data and materials:} Not applicable.

\end{document}